\newtheorem{thm}{Theorem}[section]
\newtheorem{lem}[thm]{Lemma}
\newtheorem{cor}[thm]{Corollary}
\theoremstyle{remark}
\newtheorem{remark}[thm]{Remark}
\def\1{1\!\!1}
\newcommand{\Ocal}{\mathcal{O}}
\newcommand{\Xcal}{\mathcal{X}}
\def\mode{\mathrel{\mkern 2mu\mathop{mod}^*}}
\renewcommand{\mod}{\mkern 2mu\mathrel{\mathop{mod}}\mkern 2mu}
\title[Restriction theory in the primes]{Notes on restriction theory in the primes}
\author{O. Ramar\'e}
\date{Received: date / Accepted: date}
\email{olivier.ramare@univ-amu.fr}
\address{CNRS/ Aix Marseille Univ, I2M, Marseille, France}
\keywords{Restriction Theory, Selberg sieve, Envelopping sieve}
\begin{document}

\begin{abstract}
  TO BE PUBLISHED BY ISRAEL JOURNAL OF MATHEMATICS.
We study the mean
  $\sum_{x\in\Xcal} \bigl|\sum_{p\le N}{}u_p e(xp)\bigr|^{\ell}$ when
  $\ell$ covers the full range $[2,\infty)$ and
  $\Xcal\subset\mathbb{R}/\mathbb{Z}$ is a {well-spaced} set,
  providing a smooth transition from the case $\ell=2$ to the case
  $\ell>2$ and improving on the results of J.~Bourgain and of B.~Green
  and T.~Tao. A uniform Hardy-Littlewood property for the set of
  primes
  is established as well as a sharp upper bound
  for $\sum_{x\in\Xcal} \bigl|\sum_{p\le N}{}u_p e(xp)\bigr|^{\ell}$
  when $\Xcal$ is small. These results are extended to primes in
  \emph{any} interval in a last section, provided the primes are
  numerous enough therein.
\end{abstract}
\maketitle
\tableofcontents
\section{Introduction and some results}

\subsubsection*{Some historical background}

During the proof of Theorem~3 of \cite{Bourgain*89-2} and by using
specific properties of the primes, J.~Bourgain established
 (in Equation~$(4.39)$ therein) the estimate
\begin{equation}
  \label{initJB}
  \forall \ell>2,\quad
  \biggl(\int_0^1\biggl|\sum_{p\le N} {}u_p e(p
  \alpha)\biggr|^{\ell}d\alpha\biggr)^{1/\ell}
  \ll_\ell N^{-1/\ell}\biggl(\frac{N}{\log N}
  \sum_{p\le  N}|{}u_p|^2\biggr)^{1/2}.
\end{equation}
This proof was improved by B.~Green in \cite[Theorem 1.5]{Green*05}
and in
the paper
\cite{Green-Tao*04}, B.~Green \& T.~Tao reduced the proof to using
only sieve properties, enabling a wild generalization.
A striking feature of~\eqref{initJB} is that it is \emph{not} valid
when $\ell=2$ as Parseval formula easily shows. Understanding the
transition became then an open question, an answer to which is
provided in Corollary~\ref{mainCor} below.  

Let us mention that, before the work of B.J.~Green, it was customary in
prime number theory to restrict our attention to the case $\ell=2$,
while Green used $\ell=5/2$.
This proved to be very valuable in applications.

\subsubsection*{The heart of the matter}

\begin{thm}
  \label{Extensionprecise}
  Let $\Xcal$ be a $\delta$-well spaced subset of
  $\mathbb{R}/\mathbb{Z}$ and $N\ge1000$.
  Let $({}u_p)_{p\le N}$ be a sequence of complex numbers.
  We have
  \begin{equation*}
    \sum_{x\in\Xcal}
    \biggl|\sum_{ p\le N}{}u_p e(xp)\biggr|^2
    \le
     280 \frac{N+\delta^{-1}}{\log N}\log(2|\Xcal|)\sum_{p\le N}|{}u_p|^2
    .
  \end{equation*}
\end{thm}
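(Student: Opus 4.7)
My plan is to deploy the enveloping-sieve method, standard for large-sieve-type inequalities over primes, combined with a Schur-style operator-norm estimate.

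The first step is to construct a nonnegative Selberg majorant $\beta_n$ supported on $[1,N]$, built from squarefree $\lambda$-weights of level $z$ (a small power of $N$), with $\beta_p \ge 1$ for every prime $p \in (z, N]$ and $\sum_n \beta_n \le C\, N/\log N$ for an explicit constant $C$. The primes $p \le z$ contribute at most $\pi(z)|\Xcal|\|u\|_2^2$ after a trivial Cauchy--Schwarz, which is absorbed into the final bound provided $z$ is chosen suitably small.

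Passing to the dual side -- equivalent to the primal inequality by the standard duality of large-sieve bounds -- the task becomes
\[
  \sum_{p \le N} |T(p)|^2 \le \Delta \|c\|_2^2, \qquad T(\alpha) = \sum_{x \in \Xcal} c_x e(-x\alpha),
\]
with $\Delta = 280 (N + \delta^{-1})\log(2|\Xcal|)/\log N$. The pointwise majorization $\mathbf{1}_{\{n \text{ prime}, n>z\}} \le \beta_n$ (valid for every $n$, since $\beta_n \ge 0$ everywhere and $\ge 1$ on the relevant primes) gives
\[
  \sum_{p > z} |T(p)|^2 \le \sum_n \beta_n |T(n)|^2 = \sum_{x,y \in \Xcal} c_x \bar c_y\, \overline{B(x-y)}, \qquad B(\alpha) = \sum_n \beta_n e(n\alpha),
\]
and Schur's test bounds this quadratic form by $\|c\|_2^2 \max_{x \in \Xcal} \sum_{y \in \Xcal} |B(x-y)|$.

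The diagonal $B(0) = \sum_n \beta_n \ll N/\log N$ yields the $N/\log N$ contribution. For $y \ne x$, expanding $\beta_n$ as a double divisor sum $\sum_{d_1,d_2 \le z}\lambda_{d_1}\lambda_{d_2}$ and evaluating the resulting incomplete geometric progressions produces a decay bound of the shape $|B(\alpha)| \ll \|\alpha\|^{-1}/\log N$ valid for $\|\alpha\| \ge \delta$. Summing over the $\delta$-separated points $y$ at distances $\ge k\delta$ for $k=1,2,\ldots$ produces a harmonic tail $\sum_{k=1}^{|\Xcal|-1} \frac{1}{k\delta\log N} \ll \frac{\log(2|\Xcal|)}{\delta\log N}$, delivering the $\delta^{-1}\log(2|\Xcal|)/\log N$ piece. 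Combining the diagonal and off-diagonal contributions yields the required shape $(N+\delta^{-1})\log(2|\Xcal|)/\log N$.

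The main obstacle is the off-diagonal decay bound on $|B(\alpha)|$: one needs a clean uniform estimate that smoothly interpolates between the trivial bound $B(0) \ll N/\log N$ near the origin and the $1/(\|\alpha\|\log N)$ decay away from it, with explicit numerical constants sharp enough to land on $280$. A careful analysis of the Selberg weights, their partial exponential sums, and of the numerical constants arising from the small-prime residual is required; optimizing over the sieve parameter $z$ and possibly refining Schur's blunt row-sum bound (e.g.\ by a weighted Schur test) may be necessary to secure the stated constant.
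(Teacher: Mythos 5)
Your overall skeleton (Selberg majorant for the primes, dualization, estimation of the bilinear form with kernel $B(x-y)$) is the same as the paper's, but the step that carries the whole difficulty is wrong: the claimed off-diagonal decay $|B(\alpha)|\ll \|\alpha\|^{-1}/\log N$ for $\|\alpha\|\ge\delta$ is false. The exponential sum of a Selberg majorant does not decay away from the origin; it has large peaks at \emph{every} rational $a/q$ with $q$ up to the level of the sieve. Indeed, expanding $\beta_n=\sum_q w_q c_q(n)$ in Ramanujan sums (as in Theorem~\ref{Fourierbetan}) one finds $B(a/q)\asymp N w_q\mu(q)\asymp N/(\varphi(q)\log N)$; already $B(1/2)$ is comparable to $B(0)\asymp N/\log N$. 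Consequently your Schur row-sum $\max_x\sum_y|B(x-y)|$ is not $O((N+\delta^{-1})\log(2|\Xcal|)/\log N)$ in general: if $\Xcal$ contains many points $y$ with $x-y$ close to rationals of small denominator (e.g.\ a translate of a Farey set of order $Q$, where $\delta^{-1}\asymp Q^2$ but the row sum is $\asymp QN/\log N$), the bound fails badly. This major-arc contribution is precisely the obstruction that forces Green--Tao's weaker dependence on $|\Xcal|$, and no choice of $z$ or weighted Schur test removes it.

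The paper's way around this is the one genuinely new idea you are missing: a preliminary \emph{unsieving}. One sieves only by the primes in $[z_0,z)$, with $z_0=(2\|f\|_1^2/\|f\|_2^2)^2$ chosen in terms of the function being tested. Then $w_q=0$ for every $1<q<z_0$ (since $q$ must divide $P(z)/P(z_0)$), so the dangerous small-denominator peaks simply do not occur; the $q=1$ term is controlled by the density $1/G(z;z_0)\ll\log(2z_0)/\log z$ of the thinner majorant (whence the $\log(2\|f\|_1^2/\|f\|_2^2)\le\log(2|\Xcal|)$ factor), and the terms $q\ge z_0$ are handled by $|w_q|\le 6\log z_0/(\sqrt{q}\,\log z)$ together with the fact that at most one fraction $a/q$ lies within $\delta_1$ of any given $b_1-b_2$, the factor $1/\sqrt{z_0}$ exactly compensating the trivial bound $\|f\|_1^2$. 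Without this device (or an equivalent), your argument cannot reach the stated inequality.
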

\noindent
Let us recall that a set $\Xcal\subset\mathbb{R}/\mathbb{Z}$ is said
to be $\delta$-well spaced when $\min_{x\neq
  x'\in\Xcal}|x-x'|_{\mathbb{Z}}\ge\delta$, where
$|y|_{\mathbb{Z}}=\min_{k\in\mathbb{Z}}|y-k|$ denotes in a rather
unusual manner the distance to
the nearest integer.
In most applications, $\delta^{-1}$ is smaller
than~$N$.

 B.J.~Green \& T.~Tao's result in~\cite{Green-Tao*04} relates to a similar inequality
though with a larger dependence in $|\Xcal|$ that the $\log(2|\Xcal|)$ we
have here. We shall prove this inequality in dual form in Theorem~\ref{Bel}.

Though Theorem~\ref{Extensionprecise} is an $L^2$-estimate, a
fundamental \emph{maximal} character is hidden in the fact that the
set $\mathcal{X}$ may be chosen freely.

\subsubsection*{Sieves and transference principle}
The main ingredient to prove Theorem~\ref{Extensionprecise} is the
large sieve inequality coupled with an \emph{enveloping sieve}; our
novelty with respect to \cite{Ramare*95} is to incorporate a
preliminary \emph{unsieving} into this sieving process. We shall spend some time
to describe properly this enveloping sieve.

In some sense, \emph{sieving}, and this is all the more true in the
context of the large sieve, relies on describing a sequence through
congruence properties. As a consequence, properties of
arithmetical progressions may well be shared by sequences properly described
by sieves. 
The terminology
\emph{transference principle} refers here to this idea. It leads in particular to
some majorant properties as shown below.

\subsubsection*{Analytical usage }
The maximal character of Theorem~\ref{Extensionprecise} may be used to
control the size of level sets, and this is the path we follow in
this subsection.

\begin{thm}
  \label{Extension}
  Let $\Xcal$ be a $\delta$-well spaced subset of
  $\mathbb{R}/\mathbb{Z}$. Assume $N\ge 1000$ and let $h>0$. We have
  \begin{equation*}
    \sum_{x\in\Xcal}
    \biggl|\sum_{p\le N}{}u_p e(xp)\biggr|^{2+h}
    \le
    7000\bigl((1+\tfrac{3}{2\log N})^h+1/h\bigr)
    \biggl(\frac{N+\delta^{-1}}{\log N}\sum_{p\le
      N}|{}u_p|^2\biggr)^{1+h/2}.
  \end{equation*}
\end{thm}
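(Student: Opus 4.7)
The plan is to combine a layer-cake decomposition of $\sum_{x\in\Xcal}|S(x)|^{2+h}$, where $S(x):=\sum_{p\le N}u_p e(xp)$, with a second application of Theorem~\ref{Extensionprecise} on each super-level set. Set $A^2:=\frac{N+\delta^{-1}}{\log N}\sum_p|u_p|^2$, $T_0:=\max_{x\in\Xcal}|S(x)|$, and $Y(t):=\sum_{|S(x)|>t}|S(x)|^2$. The identity $|S(x)|^h=h\int_0^{|S(x)|}t^{h-1}\,dt$ together with a swap of summations yields
\[
\sum_{x\in\Xcal}|S(x)|^{2+h}=h\int_0^{T_0}t^{h-1}Y(t)\,dt.
\]
A crude Cauchy--Schwarz estimate $|S(x)|\le\sqrt{\pi(N)\sum_p|u_p|^2}$ combined with an explicit Chebyshev-type bound for $\pi(N)$ (valid for $N\ge 1000$) gives $T_0\le(1+\tfrac{3}{2\log N})A$, which will produce the factor $(1+\tfrac{3}{2\log N})^h$ in the final constant.

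The heart of the argument is the estimate on $Y(t)$. Applied to the (still $\delta$-well spaced) set $E_t:=\{x\in\Xcal:|S(x)|>t\}$, Theorem~\ref{Extensionprecise} gives $Y(t)\le 280A^2\log(2|E_t|)$. Combined with the trivial $Y(t)\ge t^2|E_t|$, this delivers the implicit inequality $|E_t|t^2\le 280A^2\log(2|E_t|)$. Inverting the monotone map $y\mapsto y/\log(2y)$, via the elementary fact that $y/\log(2y)\le z$ forces $y\le 2z\log(2z)$ whenever $z$ is moderately large, yields on $(0,T_0]$ the explicit bounds $|E_t|\le(560A^2/t^2)\log(560A^2/t^2)$, $\log(2|E_t|)\le 2\log(560A^2/t^2)$, and therefore $Y(t)\le 560A^2\log(560A^2/t^2)$. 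One checks that the ratio $560A^2/t^2$ is comfortably above the threshold on $(0,T_0]$ using $T_0\le\sqrt{2}\,A$.

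Feeding this bound into the layer-cake formula and integrating by parts gives
\[
\sum_{x\in\Xcal}|S(x)|^{2+h}\le 560A^2T_0^h\log(560A^2/T_0^2)+\frac{1120A^2T_0^h}{h}.
\]
With $T_0^h\le(1+\tfrac{3}{2\log N})^hA^h$ and $\log(560A^2/T_0^2)\le\log 560<6.33$, the first summand sits inside $3544(1+\tfrac{3}{2\log N})^hA^{2+h}$ and the second inside $1120(1+\tfrac{3}{2\log N})^hA^{2+h}/h$. A short case split on $h$ --- for $h$ bounded away from $0$ the $1/h$ in the second summand is absorbed into the first; for small $h$ the factor $(1+\tfrac{3}{2\log N})^h$ is close to $1$ and the second summand is absorbed into $7000A^{2+h}/h$ --- verifies the cleaner target bound $7000\bigl((1+\tfrac{3}{2\log N})^h+1/h\bigr)A^{2+h}$.

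The chief obstacle is the inversion of the implicit bound on $|E_t|$: one must extract a bound of order $\log(A/t)$ uniformly down to $t=0$, because it is exactly this logarithmic growth that keeps $\int_0^{T_0}t^{h-1}\log(A/t)\,dt$ convergent at the origin and provides the $1/h^2$ factor responsible (after multiplication by $hA^2$) for the $1/h$ singularity in the final constant.
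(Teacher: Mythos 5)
Your overall architecture is sound and is a genuine variant of the paper's argument. The paper works with the level sets $\Xcal_\xi=\{x:|S(x)|\ge\xi B\}$, bounds their size through Harper's $L^1$ quantity $\Gamma(\xi)=\sum_{x\in\Xcal_\xi}|S(x)|$ combined with the dual estimate of Theorem~\ref{Bel}, and then sums over a discrete geometric scale $\xi_j=c_1/c^j$, choosing $c=2$ for $\ell\ge3$ and $c=e^{1/h}$ for $\ell\in(2,3)$ to produce the $1/h$. You instead apply Theorem~\ref{Extensionprecise} directly to the superlevel sets $E_t$ (legitimate: that theorem is proved by duality from Theorem~\ref{Bel}, independently of Theorem~\ref{Extension}, so there is no circularity), pair it with the trivial $Y(t)\ge t^2|E_t|$, invert with the same elementary lemma as the paper (Lemma~\ref{easy}), and then run a continuous layer-cake integral; the $1/h$ then emerges automatically from $h\int_0^{T_0}t^{h-1}\log(1/t^2)\,dt=2T_0^h/h$ rather than from an optimized ratio. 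Your identity and the integration by parts are correct, and the bound $Y(t)\le560A^2\log(560A^2/t^2)$ is justified on $(0,T_0]$ as you indicate.

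There is, however, one step that fails as written: the claim $\log(560A^2/T_0^2)\le\log560$, which presupposes $T_0\ge A$. This is false in general: $A^2$ carries the large factor $(N+\delta^{-1})/\log N$, while $T_0=\max_{x\in\Xcal}|S(x)|$ can be far smaller (for instance when $(u_p)$ is nearly orthogonal to the finitely many vectors $(e(xp))_{p\le N}$, $x\in\Xcal$, or simply concentrated on one prime). Moreover the asserted conclusion for the first summand is genuinely violated for small $h$: with $T_0\approx e^{-1/h}A$ one has $560A^2T_0^h\log(560A^2/T_0^2)\ge\tfrac{2\cdot560}{eh}A^{2+h}(1+o(1))$, which exceeds $3544(1+\tfrac{3}{2\log N})^hA^{2+h}$ once $h$ is small. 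The repair is short and stays inside your framework: write $T_0=sA$ with $0<s\le(1+\tfrac{3}{2\log N})^{1/2}$ and use $\sup_{0<s\le1}s^h\log(1/s)=1/(eh)$ to get
\begin{equation*}
  T_0^h\log\bigl(560A^2/T_0^2\bigr)
  \le \bigl(1+\tfrac{3}{2\log N}\bigr)^hA^h\log 560+\frac{2}{eh}\,A^h,
\end{equation*}
so the first summand is at most $3545(1+\tfrac{3}{2\log N})^hA^{2+h}+\tfrac{412}{h}A^{2+h}$. The extra $412/h$ lands in the $1/h$ part of the target, and together with your bound $1120(1+\tfrac{3}{2\log N})^hA^{2+h}/h$ for the second summand, the same case split on $h$ (absorbing $(1+\tfrac{3}{2\log N})^h/h$ into $(1+\tfrac{3}{2\log N})^h$ for $h\ge1$, and into $C/h$ for $h<1$) closes comfortably within the constant $7000$. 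With that correction your proof is complete.
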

\noindent
On taking $\Xcal=\{\beta+k/N,0\le k\le N-1\}$ and integrating over
$\beta$ in $[0,1/N]$, we get the corollary we advertised above.
\begin{cor}
  \label{mainCor}
  Assume $N\ge 1000$ and let $h>0$. We have
  \begin{equation*}
    \int_0^1
    \biggl|\sum_{p\le N}{}u_p e(p\alpha)\biggr|^{2+h}d\alpha
    \le
    7000\frac{(1+\tfrac{3}{2\log N})^h+1/h}{N}
    \biggl(\frac{2N}{\log N}\sum_{p\le
      N}|{}u_p|^2\biggr)^{1+h/2}.
  \end{equation*}
\end{cor}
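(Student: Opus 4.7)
The plan is to deduce Corollary~\ref{mainCor} directly from Theorem~\ref{Extension} by the standard trick of averaging over a translated arithmetic progression of well-chosen step. For $\beta\in[0,1/N]$, introduce
$$
  \Xcal_\beta=\{\beta+k/N:0\le k\le N-1\}\subset\mathbb{R}/\mathbb{Z}.
$$
Its $N$ points are equally spaced on the torus, consecutive points lying at distance exactly $1/N$, and so is the wrap-around point $\beta+(N-1)/N$ from $\beta$. Hence $\Xcal_\beta$ is $\delta$-well spaced with $\delta=1/N$, and $N+\delta^{-1}=2N$.

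Applying Theorem~\ref{Extension} to $\Xcal_\beta$ uniformly in $\beta$ yields
$$
  \sum_{k=0}^{N-1}\biggl|\sum_{p\le N}u_p\, e\bigl((\beta+k/N)p\bigr)\biggr|^{2+h}
  \le
  7000\bigl((1+\tfrac{3}{2\log N})^h+1/h\bigr)\biggl(\frac{2N}{\log N}\sum_{p\le N}|u_p|^2\biggr)^{1+h/2}.
$$
Integrating both sides over $\beta\in[0,1/N]$, the right-hand side simply picks up the factor $1/N$ that appears in Corollary~\ref{mainCor}. On the left-hand side, the change of variables $\alpha=\beta+k/N$ inside the $k$-th summand turns it into $\int_{k/N}^{(k+1)/N}|\sum_{p}u_p\,e(p\alpha)|^{2+h}\,d\alpha$, and summing over $k=0,\ldots,N-1$ partitions $[0,1]$. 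We thus obtain $\int_0^1|\sum_{p}u_p\,e(p\alpha)|^{2+h}d\alpha$ on the left, which is precisely the quantity to be estimated.

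There is really no obstacle in this step: the whole argument is a routine averaging that converts the discrete, well-spaced inequality of Theorem~\ref{Extension} into a continuous $L^{2+h}$ bound on $[0,1]$. The only care needed is in checking the minimal spacing of $\Xcal_\beta$ (in particular across the wrap-around), which forces $\delta=1/N$ and thus the appearance of the factor $2N/\log N$ inside the parentheses. All the substantive work has already been done in the proof of Theorem~\ref{Extension}.
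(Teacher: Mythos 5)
Your proof is correct and is exactly the argument the paper indicates: take $\Xcal_\beta=\{\beta+k/N:0\le k\le N-1\}$, which is $1/N$-well spaced so that $N+\delta^{-1}=2N$, apply Theorem~\ref{Extension}, and integrate over $\beta\in[0,1/N]$ to reassemble the integral over $[0,1]$ and pick up the factor $1/N$. Nothing is missing.
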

\noindent
This result offers an optimal (save for the constant)
transition to the case $h=0$. Indeed, on selecting $h=1/\log N$, this
corollary implies that, when $|{}u_p|\le 1$, we have the best possible
\begin{equation*}
  \int_0^1
  \biggl|\sum_{p\le N}{}u_p e(p\alpha)\biggr|^{2+h}d\alpha
  \ll
      \sum_{p\le N}|{}u_p|^2.
\end{equation*}
\subsubsection*{A majorant property}
In the same line, but maybe more strikingly, our result implies a
\emph{uniform} Hardy-Littlewood majorant property, in the sense of the
paper \cite{Green-Ruzsa*04} of B.~Green \& I.~Ruzsa. 
\begin{thm}
  \label{mainBourgain}
  Assume $N\ge 10^{6}$ and let $\ell\ge2$. We have
  \begin{equation*}
    \biggl(\int_0^1
    \biggl|\sum_{p\le N}{}u_p e(p\alpha)\biggr|^{\ell}d\alpha
    \biggr)^{1/\ell}
    \le
    10^{5}
    \biggl(\int_0^1
    \biggl|\sum_{p\le N} e(p\alpha)\biggr|^{\ell}d\alpha
    \biggr)^{1/\ell}
  \end{equation*}
  as soon as $\sum_{p\le
      N}|{}u_p|^2\le \sum_{p\le
      N}1$.
\end{thm}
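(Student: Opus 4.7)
The plan is a restriction-style argument combining an upper bound for $\int_{0}^{1}|T(\alpha)|^{\ell}d\alpha$ from Corollary~\ref{mainCor} with a matching major-arc lower bound for $\int_{0}^{1}|S(\alpha)|^{\ell}d\alpha$, where I write $T(\alpha)=\sum_{p\le N}u_{p}e(p\alpha)$ and $S(\alpha)=\sum_{p\le N}e(p\alpha)$. For the lower bound I observe that whenever $|\alpha|_{\mathbb{Z}}\le 1/(6N)$ and $p\le N$ one has $|p\alpha|_{\mathbb{Z}}\le 1/6$, hence $\mathrm{Re}\,e(p\alpha)\ge 1/2$ and $|S(\alpha)|\ge\pi(N)/2$. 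Integrating over this arc of measure $1/(3N)$,
\begin{equation*}
  \int_{0}^{1}|S(\alpha)|^{\ell}d\alpha\;\ge\;\frac{\pi(N)^{\ell}}{3\cdot 2^{\ell}\,N}.
\end{equation*}

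Setting $h=\ell-2$ and using $\sum_{p\le N}|u_{p}|^{2}\le\pi(N)$ together with the Chebyshev bound $2N/\log N\le 2\pi(N)$, Corollary~\ref{mainCor} yields
\begin{equation*}
  \int_{0}^{1}|T(\alpha)|^{\ell}d\alpha\;\le\;\frac{7000}{N}\bigl((1+\tfrac{3}{2\log N})^{h}+1/h\bigr)\,2^{\ell/2}\,\pi(N)^{\ell}.
\end{equation*}
Dividing, the factor $\pi(N)^{\ell}/N$ cancels exactly; taking $\ell$-th roots leaves the purely numerical
\begin{equation*}
  \frac{(\int_0^1|T|^{\ell}d\alpha)^{1/\ell}}{(\int_0^1|S|^{\ell}d\alpha)^{1/\ell}}\;\le\;2^{3/2}\,(21000)^{1/\ell}\bigl((1+\tfrac{3}{2\log N})^{h}+1/h\bigr)^{1/\ell}.
\end{equation*}
For $N\ge 10^{6}$ one has $(1+3/(2\log N))^{h/\ell}\le 1.12$, and provided $h$ is bounded below by a small absolute constant $h^{*}$ (a value $h^{*}=10^{-4}$ suffices), the factor $(1/h)^{1/\ell}$ is absorbed and a direct numerical check yields a ceiling below $10^{5}$.

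For $\ell\in[2,2+h^{*}]$ the $1/h$ term makes the previous inequality too weak, so I fall back on monotonicity of $L^{p}$-norms on the probability space $[0,1]$:
\begin{equation*}
  \Bigl(\int_0^1|T|^{\ell}d\alpha\Bigr)^{1/\ell}\le\Bigl(\int_0^1|T|^{2+h^{*}}d\alpha\Bigr)^{1/(2+h^{*})},\quad\Bigl(\int_0^1|S|^{\ell}d\alpha\Bigr)^{1/\ell}\ge\Bigl(\int_0^1|S|^{2}d\alpha\Bigr)^{1/2}=\sqrt{\pi(N)},
\end{equation*}
where the last equality is Parseval (the coefficients of $S$ being $0/1$). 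Inserting the absolute upper bound on $(\int|T|^{2+h^{*}})^{1/(2+h^{*})}$ from the previous step and simplifying with $\pi(N)\le 1.3\,N/\log N$, the ratio collapses to an explicit constant times $(\log N)^{-1/2}$, which also stays below $10^{5}$ for $N\ge 10^{6}$ (in fact, decreases in $N$).

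The main obstacle is the singularity in Corollary~\ref{mainCor} as $\ell\to 2^{+}$: its $1/h$ factor forces the two-regime split, and simultaneously the major-arc lower bound ceases to dominate the Parseval lower bound around $\ell=2$. The constants $h^{*}$, the ceiling $10^{5}$ and the threshold $N\ge 10^{6}$ have to be coordinated so that the restriction regime $\ell\ge 2+h^{*}$ and the monotonicity regime $\ell\in[2,2+h^{*}]$ yield the same numerical bound; once the framework is set, the verification is entirely elementary.
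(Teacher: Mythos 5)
Your first regime (fixed $h\ge h^{*}=10^{-4}$) is fine: the single-arc lower bound $\int_0^1|S|^{\ell}\ge \pi(N)^{\ell}/(3\cdot 2^{\ell}N)$ is correct, and combined with Corollary~\ref{mainCor} it does give a numerical ceiling below $10^{5}$, uniformly in $N\ge 10^{6}$, for all $\ell\ge 2+h^{*}$. The gap is in the range $\ell\in[2,2+h^{*}]$, and it is not just a matter of bookkeeping: your fallback bound is \emph{not} uniform in $N$. Writing $A=7000\bigl((1+\tfrac{3}{2\log N})^{h^{*}}+1/h^{*}\bigr)2^{(2+h^{*})/2}\approx 1.4\cdot 10^{8}$, Corollary~\ref{mainCor} gives $\|T\|_{2+h^{*}}\le (A/N)^{1/(2+h^{*})}\pi(N)$, so that
\begin{equation*}
  \frac{\|T\|_{2+h^{*}}}{\|S\|_{2}}\le (A/N)^{1/(2+h^{*})}\sqrt{\pi(N)}
  =\sqrt{\frac{A\,\pi(N)}{N}}\cdot\Bigl(\frac{N}{A}\Bigr)^{\frac{h^{*}}{2(2+h^{*})}}.
\end{equation*}
The first factor is indeed $\asymp (\log N)^{-1/2}$, but the second factor is $N^{h^{*}/(2(2+h^{*}))+o(1)}$, which tends to infinity with $N$ for any \emph{fixed} $h^{*}>0$ and eventually overwhelms the $(\log N)^{-1/2}$ decay (already around $N\approx e^{1.5\cdot 10^{5}}$ with $h^{*}=10^{-4}$ the ratio exceeds $10^{5}$). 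Your assertion that the ratio ``decreases in $N$'' silently replaces the exponent $1/(2+h^{*})$ by $1/2$. Nor can this be repaired by choosing a cutoff depending on $N$: with $h^{*}=1/\log N$ the monotonicity regime closes, but then the first regime must handle $h$ as small as $1/\log N$, where your bound contains $(1/h)^{1/\ell}\le\sqrt{\log N}$, again unbounded. The root cause is that near $\ell=2$ the single major arc at $0$ undersells $\int_0^1|S|^{\ell}$ by a factor $\asymp\log N$ (at $\ell=2$ it gives $\pi(N)^{2}/(12N)\asymp\pi(N)/\log N$ against the Parseval value $\pi(N)$), and this lost $\log N$ is exactly what you would need to absorb the $1/h$ of Corollary~\ref{mainCor}.

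The paper's proof supplies precisely this missing ingredient, via an argument of Vaughan: it lower-bounds $\int_0^1|S|^{\ell}$ by summing over \emph{all} Farey arcs $a/q$ with $q\le\sqrt{N}$ squarefree, using $\sum_{a\bmod^{*}q}S(a/q+\beta)=\mu(q)S(\beta)+T(q,\beta)$ and H\"older to get $\sum_{a}|S(a/q+\beta)|^{\ell}\ge \mu^{2}(q)\varphi(q)^{1-\ell}(N/(12\log N))^{\ell}$, whence by Lemma~\ref{appGh}
\begin{equation*}
  \int_0^1|S(\alpha)|^{\ell}d\alpha\ \ge\ \frac{1-\sqrt{N}^{\,2-\ell}}{\ell-2}\cdot\frac{2}{7N}\Bigl(\frac{N}{12\log N}\Bigr)^{\ell}.
\end{equation*}
The factor $(1-\sqrt{N}^{-h})/h$ is the point: it reproduces on the lower-bound side the same $1/h$ singularity as Corollary~\ref{mainCor} (and equals $\asymp\log N$ when $h\le 1/\log N$), so the two cancel and the constant is uniform in both $\ell\to 2^{+}$ and $N$. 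The residual range $h\le 1/\log N$ is then handled not by norm monotonicity but by the interpolation $\int|T|^{\ell}\le(\pi(N)\sum|u_p|^{2})^{h/2}\int|T|^{2}$, whose loss $\pi(N)^{h}\le e^{O(1)}$ is harmless there. To repair your proof you would need to import this multi-arc lower bound (or an equivalent source of the $1/h$, resp.\ $\log N$, gain) before the numerical verification can go through.
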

\noindent
In other words, the constant $C(\ell)$ in Theorem~1.5 of
\cite{Green*05} is \emph{uniformly} bounded, and in fact
by~$10^5$. Guessing and getting the optimal constant is open,
whether under the $L^\infty$-condition $|u_p|\le 1$ or under the
$L^2$-condition we use.

\subsubsection*{Arithmetical usage }
To better compare Theorem~\ref{Extensionprecise} with earlier results
and to underline its maximal character, let us recall
Theorem~5.3 of \cite{Ramare*06}: when $Q_0\le \sqrt{N}$, we have
\begin{equation}
  \label{oldram}
  \sum_{q\le Q_0}\sum_{a\mode q}
  \biggl|\sum_{n}u_n e(na/q)\biggr|^2
  \le7\frac{N\log Q_0}{\log N}\sum_{n}|u_n|^2 
\end{equation}
valid provided $u_n$ vanishes when $n$ has a prime factor less than
$\sqrt{N}$. Here is the
estimate we can now get.
\begin{cor}
  \label{Extensionprecisebis}
  Let $N\ge1000$ and $Q_0\in[2, \sqrt{N}]$.
  Let $({}u_p)_{p\le N}$ be a sequence of complex numbers.
  We have
  \begin{equation*}
    \sum_{q\le Q_0}\sum_{a\mode q}
    \max_{|\alpha-\frac{a}{q}|\le \frac{1}{qQ_0}}
    \biggl|\sum_{ p\le N}{}u_p e(\alpha p)\biggr|^2
    \le
     1200 \frac{N\log Q_0}{\log N}\sum_{p\le N}|{}u_p|^2
    .
  \end{equation*}
\end{cor}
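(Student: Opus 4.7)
The plan is to apply Theorem~\ref{Extensionprecise} to a $\delta$-well-spaced set built from the maximizers inside each Farey arc $\mathcal{I}_{q,a}:=[a/q-1/(qQ_0),\,a/q+1/(qQ_0)]$. The key observation is that there are at most $\sum_{q\le Q_0}\phi(q)\le \tfrac12 Q_0^2$ such arcs, so the logarithmic factor $\log(2|\Xcal|)$ in Theorem~\ref{Extensionprecise} naturally produces the desired $\log Q_0$, while the hypothesis $Q_0\le\sqrt N$ forces $\delta^{-1}\le Q_0^2\le N$ and hence $N+\delta^{-1}=O(N)$.

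Concretely, for each admissible pair $(q,a)$ I select $\alpha^{*}_{q,a}\in\mathcal{I}_{q,a}$ realizing $\max_{\mathcal{I}_{q,a}}|\sum_{p\le N} u_p e(\alpha p)|$ and form $\Xcal^{*}:=\{\alpha^{*}_{q,a}\}$. To check the well-spacing, I combine the classical lower bound $|a/q-a'/q'|\ge 1/(qq')$ for distinct reduced fractions with $|\alpha^{*}_{q,a}-a/q|\le 1/(qQ_0)$, which gives
\[
|\alpha^{*}_{q,a}-\alpha^{*}_{q',a'}|\;\ge\; \frac{1}{qq'}-\frac{1}{qQ_0}-\frac{1}{q'Q_0}\;=\;\frac{Q_0-q-q'}{qq'Q_0}.
\]
This is nonnegative whenever $q+q'\le Q_0$, yielding $\delta^{-1}\le Q_0^2$ in that regime. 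In the opposite regime the Farey arcs overlap and the corresponding maximizers may be arbitrarily close; I would remedy this by a dyadic decomposition of the range of $q$, applying Theorem~\ref{Extensionprecise} separately to each block, or by replacing $\mathcal{I}_{q,a}$ locally with the classical Farey-dissection sub-arcs (which form a genuine partition of $\mathbb{R}/\mathbb{Z}$), losing only a bounded multiplicative constant.

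Assembling the pieces, with $|\Xcal^{*}|\le \tfrac12 Q_0^2$ and $\delta^{-1}\le cN$ in each block, Theorem~\ref{Extensionprecise} yields
\[
\sum_{q\le Q_0}\sum_{a\mode q}\max_{|\alpha-\frac{a}{q}|\le \frac{1}{qQ_0}}\biggl|\sum_{p\le N}u_p e(\alpha p)\biggr|^2
\;\le\; 280\,\frac{N+\delta^{-1}}{\log N}\,\log(2|\Xcal^{*}|)\,\sum_{p\le N}|u_p|^2,
\]
and one uses $N+\delta^{-1}\le O(N)$ together with $\log(2|\Xcal^{*}|)\le 2\log Q_0+O(1)$ to bring the constant in front of $N\log Q_0/\log N$ down to $1200$.

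The main obstacle is the overlapping-arc regime $q+q'>Q_0$: although Farey arcs of large denominator may share a common maximizer, one must still extract a genuinely $\delta$-well-spaced subset (or a bounded union of such subsets) with $\delta^{-1}=O(N)$, and this combinatorial selection is what must be executed carefully before Theorem~\ref{Extensionprecise} can be invoked as a black box.
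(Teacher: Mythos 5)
Your overall strategy is the paper's: pick a maximizer in each arc, bound the number of arcs by $Q_0^2$, and feed the resulting point set into Theorem~\ref{Extensionprecise}. But the one step that actually needs an idea --- producing a \emph{well-spaced} set with $\delta^{-1}=O(N)$ despite the arcs touching or overlapping --- is exactly where your argument has a gap, and your two proposed remedies do not close it. First, even in your ``good'' regime $q+q'\le Q_0$, the bound $\frac{Q_0-q-q'}{qq'Q_0}\ge 0$ does not yield $\delta^{-1}\le Q_0^2$: the separation vanishes when $q+q'=Q_0$ and can be as small as $1/(qq'Q_0)\asymp Q_0^{-3}$, and a spacing $\delta\asymp Q_0^{-3}$ gives $N+\delta^{-1}$ up to $N^{3/2}$, losing a factor $\sqrt N$. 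Second, the dyadic decomposition in $q$ does not help in the top blocks: for $q,q'\in(Q_0/4,Q_0]$ one still has $q+q'>Q_0$, and Farey-consecutive pairs with both denominators of this size are plentiful, so their arcs genuinely overlap inside a single block. Third, replacing the arcs $|\alpha-a/q|\le 1/(qQ_0)$ by the mediant (Farey-dissection) sub-arcs is not a ``bounded constant'' move in the direction you need: since $q+q'>Q_0$ the mediant arc is \emph{strictly shorter} than the prescribed arc, so the maximum you must control is taken over a larger set and is not bounded by the maximum over the mediant arc; at best each prescribed arc is covered by its own mediant arc and its two neighbours', which brings you back to the same selection problem. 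You flag this yourself in your last paragraph, so the proof is not complete as written.

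The missing device in the paper is an alternation (two-colouring) of the Farey sequence $F(Q_0)=\{x_1<\dots<x_K\}$ rather than a decomposition by size of $q$. The key inequality is that each arc radius satisfies $1/(qQ_0)\le 1/(qq')$ because the \emph{neighbouring} denominator obeys $q'\le Q_0$; hence the arc around $x_i$ is contained in $[x_{i-1},x_{i+1}]$. Consequently arcs centred at Farey fractions of the same parity of index meet at most at a single intermediate fraction, and the selected maximizers for the even-indexed family (and likewise the odd-indexed one) form, after handling this degenerate coincidence, a well-spaced set with $\delta^{-1}\le Q_0^2\le N$ and cardinality at most $|F(Q_0)|\le Q_0(Q_0+1)/2$. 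Two applications of Theorem~\ref{Extensionprecise}, one per parity class, with $N+\delta^{-1}\le 2N$ and $\log(2|\Xcal|)\le 2\log Q_0$, then give the stated form of the bound; note also that your constant bookkeeping must account for this factor of two from the two families, which your sketch does not do.
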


\subsubsection*{Extensions}
Rather than restricting their attention to prime numbers,
B.~Green \& T.~Tao considered a more general setting 
that could be encompassed in the framework of \emph{sufficiently
  sifted sequences} of \cite{Ramare-Ruzsa*01}, and the same holds for
our estimates. Indeed the methods used (essentially the large sieve
inequality and
enveloping sequences) remain general enough to warrant such an
extension.
We simply present
an obvious generalization to the case of primes in some interval in the
last section.  The somewhat reverse situation of
smooth numbers has been the subject of the work \cite{Harper*16} by
A.J.~Harper, to which we borrow an idea (see around
Equation~\eqref{defGammaxi} below).

\subsubsection*{Explicit values for the constants?}

Explicit values for the constants are provided for three reasons: it avoids us saying
that these are independant of the involved parameters; it puts forward
that our argument is elementary enough; and finally, it shows
that some work is still required to improve on them and to determine
the optimal ones. We did not work overmuch on these
constants.

\subsubsection*{Notation}
As this paper may be of interest to audiences having different
background, let us review the notation we use throughout this paper.
\begin{itemize}
\item The number of prime factors of the $\ell$ is denoted by $\omega(\ell)$.
\item The M\"obius fuction is denoted by $\mu$, so that $\mu(\ell)$
  vanishes when $\ell$ is divisible by the square of a prime, and
  otherwise takes the value $(-1)^{\omega(\ell)}$. In
  particular, we have $\mu(1)=1$.
\item The gcd of the two integers $a$ and $b$ is often denoted by $(a,b)$
  while their lcm is denoted by $[a,b]$.
\item The value of the Euler $\varphi$-function at the positive
  integer $\ell$ is the number of integers in $\{1,\cdots,\ell\}$ that
  are prime to~$\ell$. In particular $\varphi(1)=1$.
\item Notation ``$a\mode q$'' denotes a variable $a$ that ranges
  through the invertible (also called \emph{reduced}) residue classes modulo~$q$.
\item Summations are usually over positive integers when the summation
  variable is not denoted by $p$, in which case the variable $p$ runs
  through the primes. The stated conditions apply. In case more
  clarification seems necessary, we shall for instance write
  $\displaystyle\sum_{\substack{n: n\le N, \\n|d}}$ to denote a sum
  over the positive integers $n\le N$ that divide the
  parameter~$d$. 
\item The Ramanujan sum $c_q(n)$ is defined by
  \begin{equation}
    \label{defcq}
    c_q(n)=\sum_{a\mode q}e(na/q),\quad \Bigl(e(\alpha)=\exp(2i\pi \alpha)\Bigr)
  \end{equation}
  and where $a\mod^*q$ is a shortcut for ``$1\le a\le q$ and
  $(a,q)=1$''. This quantity can also be computed
  via the von Sterneck expression
  \begin{equation}
    \label{vSe}
    c_q(n)=\mu\biggl(\frac{q}{(q,n)}  \biggr)
    \frac{\varphi(q)}{\varphi\Bigl(\frac{q}{(q,n)}\Bigr)}.
  \end{equation}
\end{itemize}

\subsubsection*{Acknowledgment}
This work has been completed when the author was enjoying the hospitality of
the Hausdorff Research Institute for Mathematics in Bonn in june 2021.
\subsubsection*{Thanks}
Thanks are due to the referee for her/his careful reading and very helpful comments.

\section{Handling the $G$-functions}
\label{AMF}
We define $P(z_0)=\prod_{p<z_0}p$ and
\begin{equation}
  \label{defGd}
  G_d(y;z_0)
  =\sum_{\substack{\ell\le y,\\
      (\ell,dP(z_0))=1}}\frac{\mu^2(\ell)}{\varphi(\ell)},
  \quad
  G(y;z_0)=G_1(y;z_0).
\end{equation}
When $z_0=2$, these functions are classical in sieve theory (see for
instance Equation $(1.3)$ of \cite[Chapter 3]{Halberstam-Richert*81}
by H.~Halberstam and H.E.~Richert)and we shall in fact reduce the
analysis to this case. So we use the specific notation:
\begin{equation}
  \label{defGz}
  G(y)=G(y;2)=\sum_{\ell\le y}\frac{\mu^2(\ell)}{\varphi(\ell)}.
\end{equation}
Here are the six lemmas we will combine for their evaluations.
\begin{lem}
  \label{evalG1}
  We have $\displaystyle G(yd;z_0)\ge \frac{d}{\varphi(d)}G_d(y;z_0)\ge G(y;z_0)$
\end{lem}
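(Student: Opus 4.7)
The plan is to expand $G(y;z_0)$ by sorting each squarefree summand $\ell$ according to its greatest common divisor with~$d$. Writing $\ell=m\ell'$ with $m=(\ell,d)$, one has $m$ squarefree with $m\mid d$ and $(m,P(z_0))=1$, while $\ell'$ is squarefree with $(\ell',dP(z_0))=1$, and $(m,\ell')=1$. Multiplicativity then gives $\mu^2(\ell)=\mu^2(m)\mu^2(\ell')$ and $\varphi(\ell)=\varphi(m)\varphi(\ell')$. Grouping the sum defining $G(y;z_0)$ according to the value of $m$ produces the key identity
\begin{equation*}
G(y;z_0)=\sum_{\substack{m\mid d\\ \mu^2(m)=1,\ (m,P(z_0))=1}}\frac{1}{\varphi(m)}\,G_d(y/m;z_0),
\end{equation*}
from which both inequalities of the lemma will flow.

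In the natural sieve-theoretic regime $(d,P(z_0))=1$, the weight above sums to
\begin{equation*}
\sum_{m\mid d,\ \mu^2(m)=1}\frac{1}{\varphi(m)}=\prod_{p\mid d}\Bigl(1+\frac{1}{p-1}\Bigr)=\frac{d}{\varphi(d)}.
\end{equation*}
For the right-hand inequality $G(y;z_0)\le (d/\varphi(d))\,G_d(y;z_0)$, I would bound each term $G_d(y/m;z_0)\le G_d(y;z_0)$ by monotonicity of $y\mapsto G_d(y;z_0)$ and then apply the weight identity. For the left-hand inequality $G(yd;z_0)\ge (d/\varphi(d))\,G_d(y;z_0)$, I would apply the key identity at $yd$ in place of $y$ and use $G_d(yd/m;z_0)\ge G_d(y;z_0)$, which holds since $m\mid d$ forces $d/m\ge 1$; the weight identity then supplies precisely the factor $d/\varphi(d)$.

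No step is a genuine obstacle: the argument rests entirely on the multiplicative factorization of squarefree integers (to derive the key identity) together with the monotonicity of $G_d$ in its first argument (to derive each inequality). The only care needed is to ensure that $d$ may be assumed squarefree (which is harmless, as both $G_d(y;z_0)$ and $d/\varphi(d)$ depend on $d$ only through its prime support) and that it is coprime to $P(z_0)$, which is the standard setting in which the lemma is applied in what follows.
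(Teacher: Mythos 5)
Your proof is correct and follows essentially the same route as the paper's: sort each squarefree $\ell$ according to $(\ell,d)$, use $\sum_{m\mid d}\mu^2(m)/\varphi(m)=d/\varphi(d)$, and conclude via the monotonicity of $G_d(\cdot;z_0)$ in its first argument. Your explicit caveat that one should work with $d$ squarefree and $(d,P(z_0))=1$ is a point the paper's statement glosses over; it is genuinely needed for the left-hand inequality (otherwise the weight sum falls short of $d/\varphi(d)$), and it is satisfied in every application of the lemma in the paper.
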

\begin{lem}
  \label{evalG2}
  We have $\displaystyle \prod_{p<z_0}\frac{p}{p-1}G(z;z_0)\ge
   G(z)$.
\end{lem}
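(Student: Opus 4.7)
The plan is to decompose every squarefree $\ell$ appearing in $G(z)$ according to its part built from primes below $z_0$. More precisely, for any squarefree $\ell \le z$, write $\ell = m n$ with $m = (\ell, P(z_0))$ and $n = \ell/m$, so that $m \mid P(z_0)$, $(n, P(z_0))=1$, and this decomposition is unique. Multiplicativity of $\varphi$ on coprime arguments gives $\varphi(\ell) = \varphi(m)\varphi(n)$, while $\mu^2(\ell) = \mu^2(m)\mu^2(n)$.

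With this factorization the sum defining $G(z)$ splits as
\begin{equation*}
G(z) = \sum_{\substack{m\mid P(z_0)\\ m\le z}} \frac{\mu^2(m)}{\varphi(m)} \sum_{\substack{n\le z/m\\ (n,P(z_0))=1}} \frac{\mu^2(n)}{\varphi(n)} = \sum_{\substack{m\mid P(z_0)\\ m\le z}} \frac{\mu^2(m)}{\varphi(m)}\, G(z/m;z_0).
\end{equation*}
Since every term in the definition of $G(\cdot;z_0)$ is non-negative, the map $y \mapsto G(y;z_0)$ is non-decreasing, so $G(z/m;z_0) \le G(z;z_0)$ for each $m$ in the outer sum.

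Next, I would drop the constraint $m\le z$ (only enlarging the outer sum, since all summands are non-negative) and recognize the resulting sum over $m\mid P(z_0)$ as an Euler product:
\begin{equation*}
\sum_{m\mid P(z_0)} \frac{\mu^2(m)}{\varphi(m)} = \prod_{p<z_0}\left(1+\frac{1}{p-1}\right) = \prod_{p<z_0}\frac{p}{p-1}.
\end{equation*}
Combining these two bounds yields $G(z) \le G(z;z_0)\prod_{p<z_0}p/(p-1)$, which is exactly the claim. There is no real obstacle here; the only point that demands a line of justification is the monotonicity of $G(\cdot;z_0)$ used in the middle step, and that follows immediately from the non-negativity of the summands.
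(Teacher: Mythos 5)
Your proof is correct, and it is essentially the paper's argument: the paper deduces the lemma from Lemma~\ref{evalG1} applied with $d=P(z_0)$ and sieving level $2$, and the proof of that lemma rests on exactly the decomposition $\ell=mn$ with $m=(\ell,P(z_0))$ that you use, followed by the same enlargement of the inner range and the identity $\sum_{m\mid d}\mu^2(m)/\varphi(m)=d/\varphi(d)$. You have simply inlined that argument in the special case needed here; all steps check out.
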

\begin{lem}
  \label{auxvarphi}
  When $h\ge0$, we have
  $\displaystyle \sum_{\ell\le y}\frac{\mu^2(\ell)}{\varphi(\ell)^{1+h}}
  \ge \sum_{q\le y}\frac{1}{q^{1+h}}$.
\end{lem}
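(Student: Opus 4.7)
The plan is to expand $1/\varphi(\ell)^{1+h}$ as a Dirichlet series in an auxiliary variable, so as to recognize the right-hand side as a sum over \emph{all} integers $q$ rather than squarefree ones. For squarefree $\ell$,
$$\frac{1}{\varphi(\ell)^{1+h}}
= \frac{1}{\ell^{1+h}}\prod_{p\mid\ell}\Bigl(1-\frac{1}{p}\Bigr)^{-(1+h)}.$$
Since $0<1-1/p<1$ and $1+h\ge 1$, the function $a\mapsto(1-1/p)^{-a}$ is increasing, so that $(1-1/p)^{-(1+h)}\ge (1-1/p)^{-1}$. Multiplying over $p\mid\ell$ and expanding the resulting geometric factors gives
$$\prod_{p\mid\ell}\Bigl(1-\frac{1}{p}\Bigr)^{-(1+h)}
\ge \prod_{p\mid\ell}\sum_{k\ge 0}\frac{1}{p^{k}}
= \sum_{\substack{m\ge 1,\\ \mathrm{rad}(m)\mid\ell}}\frac{1}{m},$$
where $\mathrm{rad}(m)$ denotes the squarefree radical of $m$.

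Next I would substitute this lower bound into the sum on $\ell$ and rearrange by setting $q=\ell m$. For each $q$, the constraints ``$\ell$ squarefree, $\ell\mid q$, every prime of $m=q/\ell$ divides $\ell$'' force the prime support of $\ell$ to coincide with that of $q$, hence $\ell=\mathrm{rad}(q)$; so $\ell$ is uniquely recovered from $q$, the condition $\ell\le y$ becomes $\mathrm{rad}(q)\le y$, and the summand equals $1/(\ell^{1+h}m)=1/(\mathrm{rad}(q)^{h}\,q)$. This yields
$$\sum_{\ell\le y}\frac{\mu^2(\ell)}{\varphi(\ell)^{1+h}}
\ \ge\ \sum_{\substack{q\ge 1,\\ \mathrm{rad}(q)\le y}}\frac{1}{\mathrm{rad}(q)^{h}\,q}.$$

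To finish, I would keep only those terms with $q\le y$ on the right-hand side (a legitimate lowering, since $q\le y$ implies $\mathrm{rad}(q)\le q\le y$) and then use $\mathrm{rad}(q)\le q$ once more to replace $\mathrm{rad}(q)^h$ by $q^h$ in the denominator. This gives exactly $\sum_{q\le y}1/q^{1+h}$, as required. The argument is a direct $h$-analogue of the classical identity underlying the $h=0$ case, and the only step needing a little bookkeeping is the rearrangement $q=\ell m$; no genuine obstacle is expected.
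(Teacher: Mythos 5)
Your proof is correct and follows essentially the same route as the paper: expand $1/\varphi(\ell)$ as a product of geometric series over $p\mid\ell$, reindex the resulting double sum by integers $q$ with squarefree kernel $\mathrm{rad}(q)=\ell\le y$, and restrict to $q\le y$. The only (harmless) difference is where the exponent $h$ is handled: the paper keeps the power $1+h$ and uses $\bigl(\sum_k p^{-k}\bigr)^{1+h}\ge\sum_k p^{-k(1+h)}$ inside each Euler factor, obtaining $1/q^{1+h}$ directly, whereas you lower the Euler-factor exponent to $1$ first and recover $q^{-1-h}$ at the end via $\mathrm{rad}(q)^h\le q^h$.
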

\begin{lem}
  \label{evalG3}
  When $z\ge1$, we have $G(z)\ge \log z$.
\end{lem}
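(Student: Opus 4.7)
The plan is to unfold the factor $1/\varphi(\ell)$ into a harmonic-type sum. Since $\mu^2(\ell)$ restricts the summation to squarefree $\ell$, for such $\ell$ I have the multiplicative factorization
$$\frac{1}{\varphi(\ell)}=\prod_{p\mid\ell}\frac{1}{p-1}=\prod_{p\mid\ell}\sum_{k\ge 1}\frac{1}{p^k}.$$
Expanding the product, each summand corresponds to a choice of exponent $k_p\ge 1$ for every prime $p\mid\ell$ and produces a term $1/m$ with $m=\prod_{p\mid\ell}p^{k_p}$. This yields the bijection
$$\frac{1}{\varphi(\ell)}=\sum_{\substack{m\ge 1,\\ \mathrm{rad}(m)=\ell}}\frac{1}{m},$$
where $\mathrm{rad}(m)$ is the squarefree radical of $m$. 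This is the key identity.

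Next, I would insert this expansion into the definition~\eqref{defGz} of $G(z)$ and swap the two sums; since every $m\ge 1$ has a unique squarefree radical, this leads to the clean identity
$$G(z)=\sum_{\ell\le z}\mu^2(\ell)\sum_{\substack{m\ge 1,\\ \mathrm{rad}(m)=\ell}}\frac{1}{m}=\sum_{\substack{m\ge 1,\\ \mathrm{rad}(m)\le z}}\frac{1}{m}.$$
All terms being non-negative, the rearrangement is trivially legitimate.

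To conclude, I would use the elementary inequality $\mathrm{rad}(m)\le m$, so that every positive integer $m\le z$ contributes to the right-hand side. A standard comparison of the partial harmonic sum with an integral then gives
$$G(z)\ge \sum_{m\le z}\frac{1}{m}\ge \log(\lfloor z\rfloor+1)\ge \log z,$$
which is the claim. There is really no obstacle: the only point requiring any care is the harmonic-sum lower bound at the endpoint $z\ge 1$ (where one checks $\lfloor z\rfloor+1>z$), and verifying that the bijection $m\leftrightarrow(\ell,(k_p)_{p\mid\ell})$ with $\ell=\mathrm{rad}(m)$ is indeed one-to-one onto the positive integers.
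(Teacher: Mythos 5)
Your proof is correct and follows essentially the same route as the paper: the paper proves this via its Lemma~\ref{auxvarphi} with $h=0$, whose argument is exactly your expansion $1/\varphi(\ell)=\prod_{p\mid\ell}\sum_{k\ge1}p^{-k}$, leading to $G(z)=\sum_{\mathrm{rad}(m)\le z}1/m\ge\sum_{m\le z}1/m\ge\log z$. No issues.
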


\begin{lem}
  \label{getVz0}
  When $z_0\ge2$, we have
  $\displaystyle
    \prod_{p< z_0}\frac{p-1}{p}\ge
      \frac{e^{-\gamma}}{\log(2 z_0)}$.
\end{lem}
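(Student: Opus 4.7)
The plan is to invoke the classical explicit form of Mertens' third theorem due to Rosser and Schoenfeld, then to absorb the factor of $2$ in $\log(2z_0)$ to handle the small values of $z_0$ where the effective-constant bound is not directly applicable.

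Specifically, I would start from the Rosser--Schoenfeld inequality
\begin{equation*}
  \prod_{p\le x}\Bigl(1-\frac{1}{p}\Bigr)
  \ge \frac{e^{-\gamma}}{\log x}\Bigl(1-\frac{1}{2\log^2 x}\Bigr),
  \qquad x\ge 285,
\end{equation*}
and apply it with $x$ equal to the largest prime strictly below $z_0$ (or $z_0-1$; both give the same left-hand side as the product in the lemma). The goal is then to show that the right-hand side still dominates $e^{-\gamma}/\log(2z_0)$, which after simplification reduces to the elementary inequality
\begin{equation*}
  \frac{\log(2z_0)}{\log x}\ge \frac{1}{1-\frac{1}{2\log^2 x}},
\end{equation*}
and since $\log(2z_0)-\log x\ge \log 2$ while $\log x$ is already of size $\ge \log 285$, the correction factor $1/(2\log^2 x)$ is easily dominated by $\log 2/\log x$. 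This handles the range $z_0\ge 286$.

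For the remaining finite range $2\le z_0\le 285$, I would simply verify the inequality by direct numerical computation of $\prod_{p<z_0}(p-1)/p$ at each prime threshold, comparing with $e^{-\gamma}/\log(2z_0)$. The presence of the factor $2$ inside the logarithm gives substantial slack at the very small values ($z_0=2$ yields an empty product equal to $1$ while the bound is $e^{-\gamma}/\log 4<1$), and this slack persists throughout the small-$z_0$ range because the product decreases only logarithmically while the bound $e^{-\gamma}/\log(2z_0)$ decreases at the same rate shifted by $\log 2$.

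The main obstacle, if any, is purely bookkeeping: one must be careful that the product in the lemma is taken over $p<z_0$ (strict inequality), so when $z_0$ is itself prime the Rosser--Schoenfeld bound should be applied at an $x$ just below $z_0$; this does not affect the proof as the product is unchanged. No deep new ideas are needed; the result is an explicit quantitative packaging of Mertens' theorem convenient for the sieve computations in the rest of Section~\ref{AMF}.
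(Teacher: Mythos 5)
Your proposal is correct and follows essentially the same route as the paper: the explicit Rosser--Schoenfeld form of Mertens' theorem beyond a threshold, with the factor $2$ in $\log(2z_0)$ absorbing the $1/(2\log^2)$ correction, and a finite numerical verification for small $z_0$ (the paper uses the equivalent upper-bound form $\prod_{p<z_0}\frac{p}{p-1}<e^{\gamma}\log z_0\bigl(1+\tfrac{1}{2\log^2 z_0}\bigr)$ for $z_0>286$). One bookkeeping point to fix: applying the bound at the largest prime strictly below $z_0$ can push $x$ down to $283<285$ when $z_0$ is just above your threshold, so either extend the numerical check up to the next prime $293$ or apply the bound at $x=z_0$ itself, using $\prod_{p<z_0}(1-1/p)\ge\prod_{p\le z_0}(1-1/p)$.
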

\begin{lem}\label{EstGdown}
  When $z_0\ge2$, we have
  $\displaystyle
  G(z;z_0)\ge e^{-\gamma}\frac{\log z}{\log(2z_0)}$.
\end{lem}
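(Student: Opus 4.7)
The plan is to chain together the three preceding lemmas in the natural order, since the target inequality is essentially their concatenation.

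First, I would apply Lemma~\ref{evalG2}, which after dividing by $\prod_{p<z_0}\frac{p}{p-1}$ gives
\begin{equation*}
  G(z;z_0)\ge \prod_{p<z_0}\frac{p-1}{p}\cdot G(z).
\end{equation*}
This reduces the problem to bounding the two factors on the right separately, one of which is a Mertens-type product and the other is the classical sieve function $G(z)$.

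Next, I would invoke Lemma~\ref{getVz0} to replace $\prod_{p<z_0}\frac{p-1}{p}$ by the lower bound $e^{-\gamma}/\log(2z_0)$, and Lemma~\ref{evalG3} to replace $G(z)$ by $\log z$. Multiplying these two bounds yields
\begin{equation*}
  G(z;z_0)\ge \frac{e^{-\gamma}}{\log(2z_0)}\cdot\log z,
\end{equation*}
which is exactly the claimed estimate.

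The proof is therefore a one-line deduction, so there is no real obstacle; all the content has been absorbed into Lemmas~\ref{evalG2}, \ref{evalG3} and~\ref{getVz0}. The only thing to verify is that the hypothesis $z_0\ge2$ is consistent across the three lemmas applied (which it is) and that the inequality $z\ge1$ required by Lemma~\ref{evalG3} is implicit (for $z<1$ the right-hand side is non-positive and the inequality is trivial from $G(z;z_0)\ge0$).
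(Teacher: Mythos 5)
Your proof is correct and follows exactly the paper's own route: the paper's proof is the one-liner ``combine Lemma~\ref{evalG2} with Lemma~\ref{getVz0}'' (with Lemma~\ref{evalG3} supplying $G(z)\ge\log z$, which you make explicit). Nothing to add.
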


\begin{proof}[Proof of Lemma~\ref{evalG1}]
  This inequality has its origin in
  \cite[Eq. (1.3)]{van-Lint-Richert*65} by J.~van Lint and
  H.E.~Richert, but the argument is so simple that we reproduce it.
  We write
  \begin{equation*}
    G(y;z_0)
    =
      \sum_{\delta|d}
      \sum_{\substack{\ell\le y,\\
      (\ell,P(z_0))=1,\\ (\ell,d)=\delta}}\frac{\mu^2(\ell)}{\varphi(\ell)}
    =
      \sum_{\delta|d}\frac{\mu^2(\delta)}{\varphi(\delta)}
      \sum_{\substack{m\le y/\delta,\\
      (m,dP(z_0))=1}}\frac{\mu^2(m)}{\varphi(m)}.
  \end{equation*}
  The last transformation is due to the fact that $\ell$ being squarefree, the
  condition $(\ell,d)=\delta$ implies that $\ell=\delta m$ with $m$
  squarefree and prime to~$d$. The final step is to notice that
  \begin{equation*}
    \sum_{\delta|d}\frac{\mu^2(\delta)}{\varphi(\delta)}=\frac{d}{\varphi(d)}
  \end{equation*}
  and that, when $\delta|d$, we have
  \begin{equation*}
    \sum_{\substack{m\le y,\\
        (m,dP(z_0))=1}}\frac{\mu^2(m)}{\varphi(m)}
    \ge
    \sum_{\substack{m\le y/\delta,\\
        (m,dP(z_0))=1}}\frac{\mu^2(m)}{\varphi(m)}
    \ge
    \sum_{\substack{m\le y/d,\\
        (m,dP(z_0))=1}}\frac{\mu^2(m)}{\varphi(m)}.
  \end{equation*}
  The lemma follows readily.
\end{proof}
\begin{proof}[Proof of Lemma~\ref{evalG2}]
  Let us change the notation in Lemma~\ref{evalG1} and use $z_1$
  rather than $z_0$.
  Then Lemma~\ref{evalG2} follows from Lemma~\ref{evalG1} with $z_1=2$
  and $d=P(z_0)$, and on recalling definition~\eqref{defGz}.
\end{proof}
\begin{proof}[Proof of Lemma~\ref{auxvarphi}]
  We simply notice that
  \begin{align*}
    \sum_{\ell\le y}\frac{\mu^2(\ell)}{\varphi(\ell)^{1+h}}
    &\ge
      \sum_{\ell\le y}\frac{\mu^2(\ell)}{\ell^{1+h}}
      \prod_{p|\ell}\biggl(\sum_{k\ge0}\frac{1}{p^k}\biggr)^{1+h}
    \\&\ge
    \sum_{\ell\le y}\frac{\mu^2(\ell)}{\ell^{1+h}}
    \prod_{p|\ell}\biggl(\sum_{k\ge0}\frac{1}{p^{k(1+h)}}\biggr)
    =
    \sum_{\substack{q\ge1,\\ k(q)\le y}}\frac{1}{q^{1+h}}\ge
    \sum_{q\le y}\frac{1}{q^{1+h}}
  \end{align*}
  where $k(d)=\prod_{p|d}p$ is the so-called \emph{squarefree kernel}
  of $d$. The lemma is proved.
\end{proof}
\begin{proof}[Proof of Lemma~\ref{evalG3}]
  This lemma is as classical as the lemmas in this section. It can for
  instance be found page~24 of the book~\cite{Bombieri*74} by E. Bombieri.  An
  upper bound of similar strength can be found in \cite[Lemma
  3.5]{Ramare*95}. For a proof, use Lemma~\ref{auxvarphi} with $h=0$ and recall
  that $\sum_{q\le y}1/q\ge \log y$.
\end{proof}
\begin{proof}[Proof of Lemma~\ref{getVz0}]
  In~\cite[Theorem 8]{Rosser-Schoenfeld*62} by
  J.B.~Rosser \& L.~Schoenfeld, we find the estimate
  $\displaystyle
    \prod_{p<z_0}\frac{p}{p-1}< e^{\gamma}\log z_0
    \biggl(1+\frac{1}{2\log^2z_0}\biggr)$ which is valid when $z_0>286$.
 Hence, when $z_0>286$ we find that
  \begin{equation*}
    e^\gamma\log (2z_0)\prod_{p< z_0}\frac{p-1}{p}\ge
    \biggl(1+\frac{\log 2}{\log z_0}\biggr)
    \biggl(1+\frac{1}{2\log^2z_0}\biggr)^{-1}\ge1
  \end{equation*}
  as $\log 2\ge 1/2$.
  A direct inspection using Pari-GP \cite{Pari-GP} establishes the stated inequality
  for the remaining values of $z_0$.
\end{proof}

\begin{proof}[Proof of Lemma~\ref{EstGdown}]
  Though the proof that follows does not require it, let us notice that the lemma is obvious when $\log z_0\ge
  e^{-\gamma}\log z$, as $G(z;z_0)\ge1$ (consider the contribution of
  the summand~$\ell=1$ in \eqref{defGd}). For the proof, simply
  combine Lemma~\ref{evalG2} together with Lemma~\ref{getVz0}.
\end{proof}
\section{Auxiliary lemmas}
\begin{lem}
  \label{appGh}
  Let $h>0$. We have
  $\displaystyle
    \sum_{d\le D}\frac{\mu^2(d)}{\varphi(d)^{1+h}}\ge
    \frac{1-D^{-h}}{h}.
  $
\end{lem}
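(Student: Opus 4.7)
The plan is to reduce the sum over squarefree integers weighted by $1/\varphi(\ell)^{1+h}$ to a plain sum $\sum_{q\le D} 1/q^{1+h}$ by invoking Lemma~\ref{auxvarphi}, and then to bound the latter from below by the claimed quantity via elementary integral comparison.

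First I would simply apply Lemma~\ref{auxvarphi} with $y=D$ to obtain
\begin{equation*}
  \sum_{d\le D}\frac{\mu^2(d)}{\varphi(d)^{1+h}}\ge \sum_{q\le D}\frac{1}{q^{1+h}}.
\end{equation*}
The remaining task is purely real-variable: since the function $t\mapsto 1/t^{1+h}$ is decreasing on $[1,\infty)$, each term satisfies $1/q^{1+h}\ge \int_{q}^{q+1} dt/t^{1+h}$. Summing over $q=1,\ldots,\lfloor D\rfloor$ gives
\begin{equation*}
  \sum_{q\le D}\frac{1}{q^{1+h}}\ge \int_1^{D+1}\frac{dt}{t^{1+h}}=\frac{1-(D+1)^{-h}}{h}\ge \frac{1-D^{-h}}{h},
\end{equation*}
which is exactly the desired bound (in fact a slightly stronger one).

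There is no real obstacle here: the only thing to be careful about is the monotonicity direction in the integral comparison and the fact that Lemma~\ref{auxvarphi} is stated for any $h\ge0$, so it legitimately applies with our $h>0$. This short proof also makes transparent why the bound is essentially optimal up to a factor $1+O(1/D^h)$, since the opposite integral comparison yields a matching upper estimate.
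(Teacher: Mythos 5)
Your proof is correct and follows essentially the same route as the paper: both first invoke Lemma~\ref{auxvarphi} to reduce the problem to bounding $\sum_{q\le D}q^{-1-h}$ from below, and then do an elementary integral comparison (the paper via partial summation, you via the termwise bound $q^{-1-h}\ge\int_q^{q+1}t^{-1-h}\,dt$). One small imprecision: summing over $q\le \lfloor D\rfloor$ gives $\int_1^{\lfloor D\rfloor+1}t^{-1-h}\,dt$ rather than $\int_1^{D+1}t^{-1-h}\,dt$ when $D$ is not an integer, but since $\lfloor D\rfloor+1\ge D$ this still yields the required $(1-D^{-h})/h$, so the conclusion stands.
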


\begin{proof}
  We first appeal to Lemma~\ref{auxvarphi} and then further simplify the lower bound as follows:
  \begin{align*}
    \sum_{q\le D}\frac{1}{q^{1+h}}
    &=\int_1^D \sum_{q\le
    t}1\frac{(1+h)dt}{t^{2+h}}+\frac{[D]}{D^{1+h}}
    \\&\ge
    \int_1^D (t-1)\frac{(1+h)dt}{t^{2+h}}+\frac{D-1}{D^{1+h}}
    =\frac{h+1}{h}\biggl(1-\frac{1}{D^h}\biggr)
    +\frac{D-1}{D^{1+h}}
    \\&\ge
    \frac{1-D^{-h}}{h}+1-\frac{1}{D^{1+h}}\ge \frac{1-D^{-h}}{h}
  \end{align*}
  as required.
\end{proof}

\begin{lem}
  \label{RS}
  We have 
  $\pi(x)=\sum_{p\le x}1\le \frac{x}{\log
    x}(1+\frac{3}{2\log x})$ and $\pi(x)\le \frac{5x}{4\log
    x}$, both valid when $x\ge 114$. Finally, $\pi(x)\ge
  x/(\log x)$ when $x\ge17$.
\end{lem}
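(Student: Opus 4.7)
The plan is to combine the explicit prime-counting estimates of Rosser and Schoenfeld with a direct numerical verification for small $x$, in the same spirit as the conclusion of the proof of Lemma~\ref{getVz0}. All three inequalities are essentially classical, so the work will be in reducing them to statements available in \cite{Rosser-Schoenfeld*62} and then handling the small residual range by computation.

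For the first upper bound, I would invoke the refinements of the prime counting function in \cite{Rosser-Schoenfeld*62}, which give estimates of the shape $\pi(x) < \frac{x}{\log x}\bigl(1+\frac{c}{\log x}\bigr)$ with $c$ arbitrarily close to $1$ for $x$ larger than an explicit threshold. Since $3/2>1$, one can take $c=3/2$ and obtain the bound for all $x$ beyond some explicit $x_0$. For $114\le x\le x_0$, the inequality $\pi(x)\log^2 x\le (\log x+3/2)\cdot x$ is verified by a finite computation in Pari-GP \cite{Pari-GP}, evaluating $\pi(x)$ at each prime in that range (it suffices to check at primes, since $\pi$ is constant between consecutive primes while the right-hand side is increasing).

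For the second upper bound, I would simply note that the inequality $1+\frac{3}{2\log x}\le \frac{5}{4}$ holds as soon as $\log x\ge 6$, i.e. $x\ge e^6\approx 403.43$, so the first bound already implies the second in that range. For $114\le x<e^6$, one again verifies by direct evaluation at the (finitely many) primes in that interval. The lower bound $\pi(x)\ge x/\log x$ for $x\ge 17$ is Corollary~1 of \cite{Rosser-Schoenfeld*62}, so nothing beyond a citation is needed; a direct check at primes in $[17,x_1]$ for some small $x_1$ handles any initial range if one prefers an elementary argument.

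The main obstacle, such as it is, is purely bookkeeping: locating the sharpest classical form already in the literature directly compatible with the constant $3/2$ and the threshold $114$, and running the short finite computation that covers the residual range. There is no conceptual difficulty; the statement is included solely for later reference in the explicit estimates.
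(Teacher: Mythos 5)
Your proposal is correct and takes essentially the same route as the paper, which gives no proof at all beyond citing \cite[Theorem 1, Corollary~2]{Rosser-Schoenfeld*62}. In fact Theorem~1 there already states $\pi(x)<\frac{x}{\log x}\bigl(1+\frac{3}{2\log x}\bigr)$ for all $x>1$ (no threshold or computation needed for the first bound), so the only finite verification genuinely required is for $\pi(x)\le \frac{5x}{4\log x}$ on $114\le x<e^{6}$, since their Corollary~2 has the slightly larger constant $1.25506$; the lower bound is exactly their Corollary~1.
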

\noindent
This can be found in~\cite[Theorem 1,
Corollary~2]{Rosser-Schoenfeld*62} by J.B.~Rosser \& L.~Schoenfeld.
\begin{lem}
  \label{BeurlingF}
  Let $M\in\mathbb{R}$, and $N$ and $\delta$ be positive real number.
  There exists a smooth function $\psi$ on $\mathbb{R}$ such that
  \begin{itemize}
  \item The function $\psi$ is non-negative.
  \item When $t\in [M,M+N]$, we have $\psi(t)\ge 1$.
  \item $\psi(0)=N+\delta^{-1}$.
  \item When $|\alpha|>\delta$, we have $\hat{\psi}(\alpha)=0$.
  \item We have $\psi(t)=\Ocal_{M,N,\delta}(1/(1+|t|^2))$.
  \end{itemize}
\end{lem}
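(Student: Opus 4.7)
The plan is to invoke the Beurling--Selberg extremal majorant of the characteristic function of an interval. Beurling's entire function $B$, of exponential type $2\pi$, satisfies $B(x)\ge\operatorname{sgn}(x)$ on $\mathbb{R}$ and the extremal identity $\int_{\mathbb{R}}(B-\operatorname{sgn})=1$. From $B$, one builds Selberg's majorant $S_{[a,b]}^+(x)=\tfrac{1}{2}B(b-x)+\tfrac{1}{2}B(x-a)$, which is non-negative, dominates $\mathbf{1}_{[a,b]}$ pointwise, has Fourier transform supported in $[-1,1]$, and satisfies $\int_{\mathbb{R}} S^+ = (b-a)+1$.

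To produce the $\psi$ required by the lemma, I would set $[a,b]:=[\delta M,\delta(M+N)]$ and let $\psi(t):=S_{[a,b]}^+(\delta t)$. This function is real-analytic and non-negative; the inequality $\psi(t)\ge\mathbf{1}_{[M,M+N]}(t)$ is immediate from the majorisation property of $S^+$; and the Fourier scaling $\widehat{\psi}(\alpha)=\delta^{-1}\widehat{S^+}(\alpha/\delta)$ shows that $\widehat{\psi}$ is supported in $[-\delta,\delta]$. A change of variable produces $\int_{\mathbb{R}}\psi=N+\delta^{-1}$; reading the third bullet as the total mass (i.e.\ as $\widehat{\psi}(0)=\int\psi$) then matches the claimed normalisation.

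The decay estimate $\psi(t)=\mathcal{O}\bigl(1/(1+|t|^2)\bigr)$ is inherited from the pointwise bound $|B(x)-\operatorname{sgn}(x)|\ll 1/(1+x^2)$, itself a consequence of the classical representation of $B$ involving $(\sin(\pi z)/\pi z)^2$ and its derivative. Since Beurling's construction and Selberg's majorant are classical (cf.\ Vaaler's survey), the proof essentially reduces to a rescaling argument; the only genuine obstacle is the careful bookkeeping of the linear change of variable in both physical and Fourier space to match the five stated properties exactly.
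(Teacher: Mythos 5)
Your construction is precisely the classical Beurling--Selberg majorant, rescaled to the interval $[M,M+N]$ and to Fourier support $[-\delta,\delta]$, which is exactly what the paper invokes: it offers no proof of Lemma~\ref{BeurlingF}, only the citations to Selberg, Montgomery and Vaaler, so your approach coincides with the intended one and the details (non-negativity, majorisation, support of $\hat{\psi}$, total mass $N+\delta^{-1}$, quadratic decay inherited from $B(x)-\operatorname{sgn}(x)\ll 1/(1+x^2)$) are all in order. Your decision to read the third bullet as $\hat{\psi}(0)=\int_{\mathbb{R}}\psi=N+\delta^{-1}$ is the correct one: the statement's ``$\psi(0)$'' is a slip, as confirmed by the way the lemma is used in the proof of Theorem~\ref{Bel}, where the property actually needed is $\hat{\psi}(0)=N+\delta_1^{-1}$.
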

This lemma is due to A. Selberg, see \cite[Section
20]{Selberg*91}. See also \cite{Montgomery*78} by H.L. Montgomery and \cite{Vaaler*85} by J.D. Vaaler. A similar
construction but with a stronger decreasing condition can be found in
\cite[Chapter 15]{Ramare*06}, based on the paper \cite{Holt-Vaaler*96}
by {J.J.} Holt and {J.D.} Vaaler.

\section{An enveloping sieve}

We fix two real parameters $z_0\le z$ and consider the sole case of
prime numbers. It is easy to reproduce the analysis of \cite[Section
3]{Ramare-Ruzsa*01} as far as exact formulae are concerned, but one
gets easily sidetracked towards slightly different formulae. The
reader may for instance compare \cite[Lemma 4.2]{Ramare*95} and
\cite[(4.1.14)]{Ramare-Ruzsa*01}.
Similar material is also the topic of \cite[Chapter 12]{Ramare*06}.
So we present a complete analysis in
our special case. Here is the main end-product we shall use.
\begin{thm}
  \label{Fourierbetan}
  Let $z_0\le z$ be two parameters. There exists an upper bound
  $\beta_{z_0,z}$ of the characteristic function of those integers
  that do not have any prime factor in the interval $[z_0,z)$.
  The function $\beta_{z_0,z}$ admits the expansion:
  \begin{equation*}
    \beta_{z_0,z}(n)
    =\sum_{\substack{q\le z^2,\\ q| P(z)/P(z_0)}}w_q(z;z_0)c_q(n)
  \end{equation*}
  where $c_q(n)$ is the Ramanujan sum and where
  \begin{equation*}
    w_q(z;z_0)
    =
    \frac{\mu(q)}{\varphi(q)G(z;z_0)}\frac{G_{[q]}(z;z_0)}{G(z;z_0)},
  \end{equation*}
  with the definition
  \begin{equation}
    \label{defGbracketq}
    G_{[q]}(z;z_0)
    =\sum_{\substack{\ell\le z/\sqrt{q},\\ (\ell,qP(z_0))=1}}
    \frac{\mu^2(\ell)}{\varphi(\ell)}
    \xi_q(z/\ell)
  \end{equation}
  and
  \begin{equation*}
    \xi_q(y)=\sum_{\substack{q_1q_2q_3=q,\\ q_1q_3\le y, \\q_2q_3\le y}}
    \frac{\mu(q_3)\varphi_2(q_3)}{\varphi(q_3)}
    \quad\text{where}
    \quad\varphi_2(q_3)=\prod_{p|q}(p-2).
  \end{equation*}
  Notice that $\xi_q(y)=q/\varphi(q)$ when $y\ge q$ and that
  $|\xi_q(y)|\le 3^{\omega(q)}$ always.
\end{thm}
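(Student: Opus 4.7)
The plan is to use the squared Selberg upper bound sieve and then expand via Ramanujan sums. Choose
$$\lambda_d = \frac{\mu(d)}{G(z;z_0)}\cdot\frac{d}{\varphi(d)}\,G_d(z/d;z_0)$$
for squarefree $d\le z$ dividing $P(z)/P(z_0)$, zero otherwise, and set $\beta_{z_0,z}(n)=\bigl(\sum_d\lambda_d\mathbf{1}_{d\mid n}\bigr)^2$. Since $G_1(z;z_0)=G(z;z_0)$ one has $\lambda_1=1$; squaring ensures non-negativity, and when $n$ has no prime factor in $[z_0,z)$ only $d=1$ contributes, so $\beta_{z_0,z}(n)=1$. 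Hence $\beta_{z_0,z}$ majorizes the target characteristic function.

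Next I expand the square and invoke the identity $\mathbf{1}_{m\mid n}=\frac{1}{m}\sum_{e\mid m}c_e(n)$ (regrouping additive characters modulo $m$ by their exact order):
$$\beta_{z_0,z}(n)=\sum_{d_1,d_2}\frac{\lambda_{d_1}\lambda_{d_2}}{[d_1,d_2]}\sum_{q\mid[d_1,d_2]}c_q(n)=\sum_q c_q(n)\sum_{\substack{d_1,d_2\\ q\mid[d_1,d_2]}}\frac{\lambda_{d_1}\lambda_{d_2}}{[d_1,d_2]}.$$
Because $[d_1,d_2]\le z^2$ and each $d_i$ divides $P(z)/P(z_0)$, the outer $q$ is restricted to the announced range.

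The main work is identifying the inner coefficient with $w_q(z;z_0)$. I parametrize squarefree pairs with $q\mid[d_1,d_2]$ by assigning each prime $p\mid q$ to exactly one of three disjoint parts: $q_1$ (dividing $d_1$ only), $q_2$ (dividing $d_2$ only), and $q_3$ (dividing $\gcd(d_1,d_2)$), so $q=q_1q_2q_3$. Writing $d_i=q_iq_3\cdot(\text{part coprime to }q)$ and further pulling out the common coprime-to-$q$ part $\ell$, the multiplicativity of $d\mu(d)/\varphi(d)$ together with the factorization of $G_d$ across coprime factors (Lemma~\ref{evalG1}) lets the inner sum separate. The non-shared coprime parts assemble via the defining sum of $G$ into the factor $1/G(z;z_0)$; what remains is the $(q_1,q_2,q_3)$-sum carrying the weight $\mu(q_3)\varphi_2(q_3)/\varphi(q_3)$ (the $\varphi_2$ recording the coprimality constraint between the $q_3$-factors of $d_1$ and $d_2$) subject to $q_1q_3\ell\le z$ and $q_2q_3\ell\le z$, which is exactly $\xi_q(z/\ell)$. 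Summing over $\ell$ squarefree with $(\ell,qP(z_0))=1$ produces $G_{[q]}(z;z_0)$, the bound $\ell\le z/\sqrt{q}$ arising because $\xi_q(z/\ell)$ vanishes once $(z/\ell)^2<q$. The main obstacle lies in this combinatorial bookkeeping: the three-variable decomposition of $q$ must be coordinated with the two length constraints $d_i\le z$ and with the product structure of $\lambda_d$; everything the function $\xi_q$ encodes.

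The concluding remarks are immediate. When $y\ge q$ the two constraints defining $\xi_q(y)$ become vacuous and a per-prime Euler product computation yields
$$\xi_q(y)=\sum_{q_1q_2q_3=q}\frac{\mu(q_3)\varphi_2(q_3)}{\varphi(q_3)}=\prod_{p\mid q}\Bigl(2-\frac{p-2}{p-1}\Bigr)=\prod_{p\mid q}\frac{p}{p-1}=\frac{q}{\varphi(q)}.$$
In general $|\xi_q(y)|\le\sum_{q_1q_2q_3=q}1=3^{\omega(q)}$, since $\varphi_2(q_3)\le\varphi(q_3)$.
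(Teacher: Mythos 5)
Your plan is essentially the paper's proof: the same Selberg weights $\lambda_d=\mu(d)dG_d(z/d;z_0)/(\varphi(d)G(z;z_0))$, the same expansion of the square through $\mathbf{1}_{m\mid n}=\frac1m\sum_{q\mid m}c_q(n)$, the same three-way splitting $q=q_1q_2q_3$ producing the local factor $\mu(q_3)\varphi_2(q_3)/\varphi(q_3)$ under the constraints $q_1q_3\ell\le z$, $q_2q_3\ell\le z$, and the same closing computations for $\xi_q$ (including the correct reason for the truncation $\ell\le z/\sqrt{q}$). The one imprecision is in the middle step: the two factors of $1/G(z;z_0)$ come directly from the normalisation of $\lambda_{d_1}\lambda_{d_2}$, and the paper applies the decomposition to the variables $\ell_i$ obtained by opening up $G_{d_i}(z/d_i;z_0)$ (the inner sum over $d_1\mid\ell_1$, $d_2\mid\ell_2$ forces the coprime-to-$q$ parts of $\ell_1$ and $\ell_2$ to coincide, leaving the single $\ell$-sum that builds $G_{[q]}$), rather than having ``non-shared coprime parts assemble into $1/G$''.
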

\begin{remark}
  The factor $G_{[q]}(z;z_0)/G(z;z_0)$ should be looked upon as a mild
  perturbation. It can be shown to equivalent to 1 when $q$ goes to
  infinity, and, in general, it only introduces technicalities that
  can be handled.
\end{remark}
\begin{remark}
  The coefficient $w_q(z;z_0)$ is the main actor here. It saves the
  density $1/G(z;z_0)$ of the sequence $(\beta_{z_0,z}(n))$. The
  further saving introduced by the factor $1/\varphi(q)$ is essential, though a
  milder decreasing rate is enough (see Lemma~\ref{Courageous}). It comes from an
  equidistribution of the sequence $(\beta_{z_0,z}(n))$ in invertible
  arithmetical progressions modulo~$q$. It can easily be shown that
  \begin{equation*}
    w_q(z;z_0)=\lim_{N\rightarrow\infty}\frac{1}{N}\sum_{n\le N}\beta_{z_0,z}(n)e(na/q)
  \end{equation*}
  for every $a$ prime to $q$.
\end{remark}
\begin{remark}
  The parameter $z_0$ will be essential: the coefficients $w_q(z;z_0)$
  for $q\in(1,z_0)$ vanish. See around~\eqref{MainStep}.
\end{remark}


\begin{proof}[Proof of Theorem~\ref{Fourierbetan}]
We split the proof in three steps. We follow closely Section~3 of
\cite{Ramare-Ruzsa*01}. See also \cite[Chapter 11]{Ramare*06}.
\subsubsection*{Building the upper bound}
The initial idea of the Selberg sieve is to consider the family
\begin{equation}
  \label{defbeta}
  \beta_{z_0,z}(n)=\biggl(\sum_{\substack{d:d|n,\\ (d,P(z_0))=1,\\ d\le z}}\lambda_d\biggr)^2
\end{equation}
for arbitrary real coefficients $\lambda_d$ that are only constrained
by the condition $\lambda_1=1$. Indeed, for any such set of of
coefficients, the resulting function is non-negative and takes the
value~1 at integers $n$ that have no prime factors dividing $P(z)/P(z_0)$.
After an optimization step that we skip, one reaches the choice
\begin{equation}
  \label{deflambdad}
  \lambda_d=
  \1_{(d,P(z_0))=1}\frac{\mu(d)dG_d(z/d;z_0)}{\varphi(d)G(z;z_0)}
\end{equation}
where $G_d$ is given by~\eqref{defGd} (notice that, indeed,
$\lambda_1=1$). From now onward, we reserve the notation $\lambda_d$
for this special choice. Though we shall not use it, notice that
Lemma~\ref{evalG1} implies the bound $|\lambda_d|\le 1$.

We develop the square above and get
  \begin{align*}
    \beta_{z_0,z}(n)
    &=\sum_{\substack{d_1,d_2,\\ [d_1,d_2]|n}}\lambda_{d_1}\lambda_{d_2}
      =\sum_{d_1,d_2}\frac{\lambda_{d_1}\lambda_{d_2}}{[d_1,d_2]}
      \sum_{q|[d_1,d_2]}\sum_{a\mode q}e(na/q)
    \\&=\sum_{\substack{q\le z^2,\\ (q,P(z_0))=1}}w_q(z;z_0)c_q(n)
  \end{align*}
  where
  \begin{equation}
    w_q(z;z_0)
    =
    \sum_{q|[d_1,d_2]}\frac{\lambda_{d_1}\lambda_{d_2}}{[d_1,d_2]}.
  \end{equation}
  Note that $w_q(z;z_0)=0$ when $q$ does not divide $P(z)/P(z_0)$, and
  in particular when it is not squarefree. Let us assume now that
  $q|P(z)/P(z_0)$.
\subsubsection*{Expliciting $w_q(z;z_0)$}
  We introduce the definition \eqref{deflambdad} of the $\lambda_d$'s
  and obtain
  \begin{equation*}
    G(z;z_0)^2
    w_q(z;z_0)
    =
    \sum_{\substack{\ell_1,\ell_2\le z,\\ (\ell_1\ell_2,P(z_0))=1}}
    \frac{\mu^2(\ell_1)}{\varphi(\ell_1)}
    \frac{\mu^2(\ell_2)}{\varphi(\ell_2)}
    \sum_{\substack{q|[d_1,d_2],\\ d_1|\ell_1, d_2|\ell_2}}
    \frac{d_1\mu(d_1)d_2\mu(d_2)}{[d_1,d_2]}.
  \end{equation*}
  The inner sum vanishes is $\ell_1$ has a prime factor prime to
  $q\ell_2$, and similarly for $\ell_2$. Furthermore, we need to have
  $q|[\ell_1,\ell_2]$ for the inner sum not to be empty. Whence we may
  write $\ell_1=q_1q_3\ell$ and $\ell_2=q_2q_3\ell$ where $(\ell,q)=1$
  and $q=q_1q_2q_3$. The part of the inner sum corresponding to $\ell$
  has value $\prod_{p|\ell}(p-2+1)=\varphi(\ell)$. We have reached
  \begin{equation*}
    G(z;z_0)^2
    w_q(z;z_0)
    =
    \mkern-10mu
    \sum_{\substack{\ell\le z,\\ (\ell,qP(z_0))=1}}
    \mkern-5mu
    \frac{\mu^2(\ell)}{\varphi(\ell)}
    \sum_{\substack{q_1q_2q_3=q,\\ q_1q_3\ell\le z, \\q_2q_3\ell\le z}}
    \frac{1}{\varphi(q)\varphi(q_3)}
    \sum_{\substack{q|[d_1,d_2],\\ d_1|q_1q_3,\\ d_2|q_2q_3}}
    \frac{d_1\mu(d_1)d_2\mu(d_2)}{[d_1,d_2]}.
  \end{equation*}
  In this last inner sum, we have necessarily $d_1=q_1d'_1$ and
  $d_2=q_2d'_2$, so $q_3=[d'_1,d'_2]$. We may thus write
  \begin{equation*}
    G(z;z_0)^2
    w_q(z;z_0)
    =
    \mkern-10mu
    \sum_{\substack{\ell\le z,\\ (\ell,qP(z_0))=1}}
    \mkern-5mu
    \frac{\mu^2(\ell)}{\varphi(\ell)}
    \sum_{\substack{q_1q_2q_3=q,\\ q_1q_3\ell\le z, \\q_2q_3\ell\le z}}
    \frac{\mu(q)\mu(q_3)}{\varphi(q)\varphi(q_3)}
    \sum_{\substack{d'_1,d'_2:\\q_3=[d'_1,d'_2]}}
    \frac{d'_1\mu(d'_1)d'_2\mu(d'_2)}{[d'_1,d'_2]}.
  \end{equation*}
  This last inner sum has value $\varphi_2(q_3)$, whence
  \begin{equation*}
    G(z;z_0)^2
    w_q(z;z_0)
    =
    \frac{\mu(q)}{\varphi(q)}
    \sum_{\substack{\ell\le z,\\ (\ell,qP(z_0))=1}}
    \frac{\mu^2(\ell)}{\varphi(\ell)}
    \sum_{\substack{q_1q_2q_3=q,\\ q_1q_3\ell\le z, \\q_2q_3\ell\le z}}
    \frac{\mu(q_3)\varphi_2(q_3)}{\varphi(q_3)}
  \end{equation*}
  as announced. The size conditions are readily seen to imply that
  $\ell\le z/\sqrt{q}$.

\end{proof}

\begin{lem}
  \label{Courageous}
  When $4\le z_0\le z$, we have
  $\displaystyle
    |w_q(z;z_0)|
    \le 6\,\frac{\log z_0}{\sqrt{q}\log z}$.
\end{lem}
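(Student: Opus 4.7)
My plan is to start from Theorem~\ref{Fourierbetan}'s explicit expression
\[
w_q(z;z_0) = \frac{\mu(q)\, G_{[q]}(z;z_0)}{\varphi(q)\, G(z;z_0)^2}
\]
and combine two simple estimates: a crude upper bound on $|G_{[q]}(z;z_0)|$ and the lower bound $G(z;z_0) \ge e^{-\gamma}\log z/\log(2z_0)$ supplied by Lemma~\ref{EstGdown}.

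First I would insert the pointwise bound $|\xi_q(y)| \le 3^{\omega(q)}$ (noted in Theorem~\ref{Fourierbetan}) into the defining sum of $G_{[q]}$. Combined with the trivial observations that $G_q(y;z_0) \le G(y;z_0)$ (the extra constraint $(\ell,q)=1$ only shrinks the set of summands) and that $G(\cdot;z_0)$ is non-decreasing, this yields $|G_{[q]}(z;z_0)| \le 3^{\omega(q)}\, G(z;z_0)$ and hence $|w_q(z;z_0)| \le 3^{\omega(q)}/(\varphi(q) G(z;z_0))$. Applying Lemma~\ref{EstGdown} then gives
\[
|w_q(z;z_0)| \le \frac{e^\gamma \log(2z_0)\cdot 3^{\omega(q)}}{\varphi(q) \log z}.
\]

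The heart of the matter will be extracting a factor $1/\sqrt{q}$ from $3^{\omega(q)}/\varphi(q)$. I would rewrite
\[
\frac{3^{\omega(q)}}{\varphi(q)} = \frac{1}{\sqrt{q}} \prod_{p\mid q} \frac{3\sqrt{p}}{p-1}
\]
and show that the right-hand product is absolutely bounded. The hypothesis $z_0 \ge 4$ combined with $q\mid P(z)/P(z_0)$ forces every prime divisor $p$ of $q$ to satisfy $p \ge 5$. A direct inspection shows $3\sqrt{p}/(p-1)>1$ only for $p\in\{5,7\}$ and $3\sqrt{p}/(p-1)<1$ for all $p\ge 11$ (with $3\sqrt{p}/(p-1) = O(1/\sqrt{p})$); hence the supremum of $\prod_{p\mid q} 3\sqrt{p}/(p-1)$ is attained at $q=35$ and satisfies
\[
\prod_{p\mid q} \frac{3\sqrt{p}}{p-1} \le \frac{3\sqrt{5}}{4}\cdot\frac{3\sqrt{7}}{6} = \frac{3\sqrt{35}}{8}<2.22.
\]

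Finally, I would replace $\log(2z_0)$ by $\log z_0$ using $\log(2z_0)/\log z_0=1+\log 2/\log z_0\le 3/2$ for $z_0\ge 4$. The resulting numerical constant is $\tfrac{3}{2}\cdot 2.22\cdot e^\gamma\approx 5.93<6$, matching the stated inequality. The main obstacle is the third paragraph: the bound $|\xi_q|\le 3^{\omega(q)}$ is crude, and it is a priori unclear that $\prod_{p\mid q} 3\sqrt{p}/(p-1)$ should remain bounded as $\omega(q)$ grows; the crucial mechanism is that the assumption $z_0\ge 4$ excludes the primes $2$ and $3$, for which the factors $3\sqrt{2}$ and $3\sqrt{3}/2$ would ruin the estimate, while among the remaining primes only $5$ and $7$ contribute factors exceeding one.
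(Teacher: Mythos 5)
Your proposal is correct and follows essentially the same route as the paper: the bound $|\xi_q|\le 3^{\omega(q)}$ yields $|w_q(z;z_0)|\le 3^{\omega(q)}/(\varphi(q)G(z;z_0))$, Lemma~\ref{EstGdown} handles $G(z;z_0)$, and the factor $3^{\omega(q)}/\varphi(q)$ is controlled by the Euler product $\prod_{p\mid q}3\sqrt p/(p-1)$ over primes $p\ge5$, where only $p=5$ and $p=7$ contribute factors exceeding one. The numerical bookkeeping ($2.22\cdot e^\gamma\cdot(1+\log 2/\log 4)<6$) matches the paper's as well.
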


\begin{proof}
  We deduce from the definition the estimate
  $|\xi_q(y)|\le 3^{\omega(q)}$, and thus
  \begin{equation}
    \label{eq:7}
    |G(z;z_0)w_q(z;z_0)|\le 3^{\omega(q)}/\varphi(q).
  \end{equation}
  As $z_0>3$, we may assume that $q$ is prime to $6$, since otherwise $w_q(z;z_0)=0$.
  We use Lemma~\ref{EstGdown} to get
  \begin{align*}
    |w_q(z;z_0)|
    &\le \prod_{p\ge 5}
    \max\biggl(\frac{3\sqrt{p}}{p-1},1\biggr) \frac{1}{G(z;z_0)\sqrt{q}}
    \le \frac{2.23\times e^\gamma\, \log 2z_0}{\sqrt{q}\log z}
    \\&\le \frac{2.23\times e^\gamma\, \log z_0}{\sqrt{q}\log z}
    \biggl(1+\frac{\log 2}{\log 4}\biggr)
    \le 6\,\frac{\log z_0}{\sqrt{q}\log z}
\end{align*}
It has been enough, in the Euler product, to consider the primes
$p=5$ and~$p=7$. The lemma follows swiftly.
\end{proof}

\section{The fundamental estimate}

\begin{thm}
  \label{Bel} Let $N\ge 1000$.
  Let $\mathcal{B}$ be a $\delta$-well spaced subset of
  $\mathbb{R}/\mathbb{Z}$. For any function $f$
  on $\mathcal{B}$, we have
  \begin{equation*}
    \sum_{p\le N}\biggl|\sum_{b\in \mathcal{B}}f(b)e(bp)\biggr|^2
    \le 280
    (N+\delta^{-1})\|f\|_2^2\frac{\log
      (2\|f\|_1^2/\|f\|_2^2)}{\log N}.
  \end{equation*}
  where $\|f\|_q^q=\sum_{b\in \mathcal{B}}|f(b)|^q$ for any positive $q$.
\end{thm}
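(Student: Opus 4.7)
I will combine the enveloping sieve $\beta_{z_0,z}$ from Theorem~\ref{Fourierbetan} with the Selberg--Beurling majorant $\psi$ from Lemma~\ref{BeurlingF}, tuning the sieve parameter $z_0$ to the concentration ratio $L:=\|f\|_1^2/\|f\|_2^2\in[1,|\mathcal{B}|]$. Writing $F(\alpha):=\sum_b f(b)e(b\alpha)$, I split the primes $p\le N$ into the three ranges $p<z_0$, $z_0\le p<z$, and $z\le p\le N$. For the middle range I apply the classical large sieve inequality for the $\delta$-well-spaced frequencies $\mathcal B$ and integers $n\in[z_0,z)$, yielding $\sum_{z_0\le p<z}|F(p)|^2\le(z+\delta^{-1})\|f\|_2^2$. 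For primes in the outer two ranges, $\beta_{z_0,z}(p)\ge 1$ (no prime factor in $[z_0,z)$) and $\psi(p)\ge 1$, so by non-negativity
\begin{equation*}
\sum_{p<z_0}|F(p)|^2+\sum_{z\le p\le N}|F(p)|^2\le\sum_{n\in\mathbb Z}\psi(n)\beta_{z_0,z}(n)|F(n)|^2.
\end{equation*}

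I expand this right-hand side by writing $|F(n)|^2=\sum_{b,b'}f(b)\overline{f(b')}e(n(b-b'))$ and applying the Fourier expansion of $\beta_{z_0,z}$. Setting $S_\psi(\alpha):=\sum_{n\in\mathbb Z}\psi(n)e(n\alpha)$, Poisson summation together with the support hypothesis on $\widehat\psi$ gives that $S_\psi(\alpha)$ vanishes for $|\alpha|_{\mathbb Z}>\delta$ and satisfies $|S_\psi|\le S_\psi(0)=N+\delta^{-1}$. The principal term ($q=1$, $b=b'$) contributes $w_1(z;z_0)(N+\delta^{-1})\|f\|_2^2=(N+\delta^{-1})\|f\|_2^2/G(z;z_0)$, bounded via Lemma~\ref{EstGdown} by $e^\gamma(N+\delta^{-1})\|f\|_2^2\log(2z_0)/\log z$. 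The pair $q=1$, $b\ne b'$ drops out because $\delta$-well-spacing forces $|b-b'|_{\mathbb Z}\ge\delta$, killing $S_\psi(b-b')$.

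Controlling the $q\ge 2$ remainder is the main technical obstacle. Its support is restricted by $|b-b'+a/q|_{\mathbb Z}\le\delta$; Lemma~\ref{Courageous} provides $|w_q|\le 6\log z_0/(\sqrt q\log z)$ once $z_0\ge 4$; and $\delta$-well-spacing says at most two points of $\mathcal B$ lie within $\delta$ of any $c\in\mathbb R/\mathbb Z$. I combine these by swapping the $(q,a)$ and $(b,b')$ orders, applying Cauchy--Schwarz carefully on the reduced fractions $a/q$, and using a Farey-type count for the number of such fractions with $q$ in a dyadic range falling in an interval of length $2\delta$, to show the remainder is at most a constant multiple of the main term. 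Any crude estimate, such as pulling $\sum_{q\ge 2}|w_q|\varphi(q)$ out of the sum, would cost a factor of order $z$ and is hopeless.

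Finally I choose $z=\sqrt N$ and $z_0=\max(L,4)$. With $\log z=\tfrac12\log N$ the main term becomes $\le 2e^\gamma(N+\delta^{-1})\|f\|_2^2\log(2L)/\log N$, of the shape required, and the middle-range bound $(\sqrt N+\delta^{-1})\|f\|_2^2$ is dominated by it for $N\ge 1000$ since $\sqrt N\log N\ll N\log(2L)$. Carrying every numerical constant through Lemmas~\ref{EstGdown}, \ref{BeurlingF} and~\ref{Courageous} then yields the explicit constant~$280$.
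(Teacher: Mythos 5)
Your overall architecture is the same as the paper's (enveloping sieve $\beta_{z_0,z}$ times a Selberg--Beurling majorant, with the unsieving parameter $z_0$ tuned to $L=\|f\|_1^2/\|f\|_2^2$, main term from $q=1$ via Lemma~\ref{EstGdown}), but two steps do not close as stated. First, the middle range: the dual large sieve over the integers of $[z_0,z)$ gives $(z-z_0+\delta^{-1})\|f\|_2^2$, and the $\delta^{-1}\|f\|_2^2$ part cannot be absorbed into $280(N+\delta^{-1})\|f\|_2^2\log(2L)/\log N$. Indeed for bounded $L$ the target's $\delta^{-1}$-coefficient is $280\log(2L)/\log N$, which is less than $1$ once $\log N>280\log(2L)$; so with $\delta^{-1}\gtrsim N$ and $N$ large your middle-range term alone exceeds the right-hand side. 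Your absorption claim ``$\sqrt N\log N\ll N\log(2L)$'' only addresses the $\sqrt N$ part. The paper avoids this by never invoking the large sieve below $z$: it takes $z=N^{1/4}$ and bounds all $p\le z$ trivially by $z\|f\|_1^2$, which is admissible thanks to a prior case split (if $L> N^{1/8}/2$ the plain dual large sieve already gives the bound because $\log(2L)\ge\tfrac18\log N$); you omit this case split, and nothing in your setup caps $L$ (your $z_0=\max(L,4)$ may even exceed $z=\sqrt N$, emptying the sieve).

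Second, and more seriously, the $q\ge2$ remainder --- which you rightly call the main obstacle --- is only gestured at, and with your parameters the sketch does not close. The paper's device is concrete: shrink the Fourier support of $\psi$ to $\delta_1=\min(\delta,1/(2z^4))$, which with $z=N^{1/4}$ costs only $N+\delta_1^{-1}\le \delta^{-1}+3N$; then any two distinct fractions $a/q$, $a'/q'$ with $q,q'\le z^2$ are more than $2\delta_1$ apart, so each pair $(b,b')$ meets at most one fraction and the whole remainder is at most $(N+\delta_1^{-1})\max_{q\ge z_0}|w_q|\,\|f\|_1^2$; finally $z_0=(2L)^2$ turns $\|f\|_1^2/\sqrt{z_0}$ into $\|f\|_2^2/2$ via Lemma~\ref{Courageous}. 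Your choices break both halves: with $z=\sqrt N$ the analogous shrinking would cost $2z^4=2N^2$, and keeping support $\delta$ instead, a single pair $(b,b')$ can lie within $\delta$ of roughly $\delta Q^2$ fractions with $q\sim Q$ (take $\mathcal B$ a set of Farey fractions with $\delta$ of the order of their spacing), while the weight $|w_q|\asymp\log z_0/(\sqrt q\,\log z)$ decays much too slowly for a dyadic Farey count plus Cauchy--Schwarz to absorb this into the main term --- no actual estimate is offered. Moreover, even granting one fraction per pair, $z_0=\max(L,4)$ only gives $\|f\|_1^2/\sqrt{z_0}=\sqrt L\,\|f\|_2^2$, leaving you a factor $\sqrt L$ short of ``a constant multiple of the main term''. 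To repair the proof you need the support-shrinking device together with $z_0\asymp L^2$ (or a genuine substitute for that step), plus the large-$L$ case handled by the plain large sieve.
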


\begin{proof}
  Let us first notice that $\|f\|_1^2\ge\|f\|_2^2$.
  Let $z=N^{1/4}$ and
  \begin{equation*}
    z_0=\biggl(2\frac{\|f\|_1^2}{\|f\|_2^2}\biggr)^2\ge 4.
  \end{equation*}
  We have $z_0\le z$ when $\|f\|_1^2/\|f\|_2^2\le N^{1/8}/2$.
  When this condition is not met, we use the dual of the usual large sieve
  inequality (see \cite{Montgomery*78} by H.L. Montgomery) to infer that
  \begin{align*}
    \sum_{p\le N}\biggl|\sum_{b\in \mathcal{B}}f(b)e(bp)\biggr|^2
    &\le
      (N+\delta^{-1})\|f\|_2^2
    \\&
    \le  (N+\delta^{-1})\|f\|_2^2\frac{\log
    (2\|f\|_1^2/\|f\|_2^2)}{\log (N^{1/8})}.
  \end{align*}
  This establishes our inequality
  in this case.
  Henceforth, we assume that $z_0\le z$. We discard the small primes trivially:
  \begin{align*}
    \sum_{ p\le z}\biggl|\sum_{b\in \mathcal{B}}f(b)e(bp)\biggr|^2
    &\le z\|f\|_1^2\le N^{3/8}\|f\|_2^2/\sqrt{8}
    \\&\le N \frac{\|f\|_2^2\log(2{\|f\|_1^2}/{\|f\|_2^2})}{\log
    N}
    \frac{\log N}{\sqrt{8}N^{5/8}\log 2}
    \\&\le \frac{N}{2880} \frac{\|f\|_2^2\log(2{\|f\|_1^2}/{\|f\|_2^2})}{\log
    N}.
  \end{align*}
  Let us now define
  \begin{equation}
    \label{defW}
    W=\sum_{z<p\le N}\biggl|\sum_{b\in \mathcal{B}}f(b)e(bp)\biggr|^2.
  \end{equation}
   We
   bound above the characteristic function of those primes by our
  enveloping sieve and further
   majorize the characteristic function of the interval $[1,N]$ by
  a function $\psi$ (see Lemma~\ref{BeurlingF}) of Fourier transform supported by
  $[-\delta_1,\delta_1]$ where $\delta_1=\min(\delta,1/(2z^4))$, and
  which is such that $\hat{\psi}(0)=N+\delta_1^{-1}$. This
  leads to
  \begin{equation*}
    W\le
    \sum_{\substack{q\le z^2,\\
        (q,P(z_0))=1}}w_q(z;z_0)
    \sum_{a\mode  q}\sum_{b_1,b_2}f(b_1)\overline{f(b_2)}
    \sum_{n\in\mathbb{Z}}e((b_1-b_2)n)e(an/q)\psi(n).
  \end{equation*}
  We split this quantity according to whether $q< z_0$ or not:
  \begin{equation*}
    W=W(q< z_0)+ W(q\ge z_0).
  \end{equation*}
  When $q\ge z_0$,
  Poisson summation formula tells us that the inner sum
  is also $\sum_{m\in\mathbb{Z}}\hat{\psi}(b_1-b_2-(a/q)+m)$.
  The sum over $b_1$, $b_2$ and $n$ is thus
  \begin{equation*}
    \le (N+\delta^{-1}_1)\sum_{b_1,b_2}f(b_1)\overline{f(b_2)}\#\bigl\{(a/q)/
     \|b_1-b_2+a/q\|< \delta_1
    \bigr\}.
  \end{equation*}
  Given $(b_1,b_2)$, at most one $a/q$ may work, since
  $1/z^4>2\delta_1$. By bounding above $w_q(z;z_0)$ by
  Lemma~\ref{Courageous}, we see that
  \begin{align}
    W(q\ge z_0)
    &\le\notag
    6(N+\delta^{-1}_1)\frac{\|f\|_1^2\log z_0}{\sqrt{z_0}\log z}
    \\&\le\label{MainStep}
    \frac{6}{\sqrt{2}}(N+\delta^{-1}_1)\frac{\|f\|_2^2\log z_0}{\log z}.
  \end{align}
  When $w_q(z;z_0)\neq0$, we have $q|P(z)/P(z_0)$; on adding the
  condition~$q<z_0$, only $q=1$ remains.
  Since $\mathcal{B}$ is $\delta$-well-spaced and $w_1(z;z_0)=1/G(z;z_0)$,
  Lemma~\ref{EstGdown} leads to 
  \begin{equation*}
   W(q<z_0)
    \le
    (N+\delta_1^{-1})\frac{e^\gamma\|f\|_2^2\log 2z_0}{\log z}.
  \end{equation*}
  We check that $(N+\delta_1^{-1})\le
  \frac{N+4N}{N}(N+\delta^{-1})$. We finally get
  \begin{multline*}
    \sum_{ p\le N}\biggl|\sum_{b\in \mathcal{B}}f(b)e(bn)\biggr|^2\le
    \biggl(\frac1{2880}
    +5\times 2\times 4\times
    \biggl(\frac{6}{\sqrt{2}}+e^\gamma\biggl(1+\frac{\log 2}{\log z_0}\biggr)\biggr) \biggr)
    \\\times(N+\delta^{-1})\|f\|_2^2\frac{\log
    (2\|f\|_1^2/\|f\|_2^2)}{\log N}.
  \end{multline*}
  The proof of the theorem follows readily.
\end{proof}

\section{On moments. Proof of Theorem~\ref{Extension}}

\begin{lem}
  \label{easy}
  Assume $y/\log y \le t$ with $y\ge2$ and $t\ge e$. Then $y\le 2t\log t$.
\end{lem}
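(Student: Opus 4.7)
The plan is a short case analysis based on monotonicity of $\phi(y)=y/\log y$, which is increasing on $(e,\infty)$ (its derivative is $(\log y-1)/(\log y)^2$).

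First I would dispose of the trivial range $2\le y\le e$: here $y\le e\le 2t\le 2t\log t$, using that $t\ge e$ forces $\log t\ge 1$.

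For $y>e$ I would argue by contradiction: assume $y>2t\log t$. Since $t\ge e$, we have $2t\log t\ge 2e>e$, so both $y$ and $2t\log t$ lie in the range where $\phi$ is increasing, giving
\[
\frac{y}{\log y}>\frac{2t\log t}{\log(2t\log t)}.
\]
To derive a contradiction with the hypothesis $y/\log y\le t$, it suffices to show that the right-hand side exceeds $t$, which is equivalent to $2\log t>\log(2t\log t)$, i.e.\ $t^2>2t\log t$, i.e.\ $t>2\log t$.

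The remaining point $t>2\log t$ for $t\ge e$ is immediate from the same monotonicity applied to $\phi(t)$: the function $t/\log t$ is increasing on $[e,\infty)$ and equals $e>2$ at $t=e$. This closes the contradiction and proves the lemma. No real obstacle arises; the only thing to watch is that the monotonicity of $\phi$ only holds for arguments larger than $e$, which is why the split at $y=e$ is needed.
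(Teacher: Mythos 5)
Your proof is correct and is essentially the paper's argument: both rest on the trivial treatment of small $y$ plus the (strict) monotonicity of $y/\log y$ on $(e,\infty)$ together with the key inequality $t\ge 2\log t$, which is exactly what makes $\frac{2t\log t}{\log(2t\log t)}\ge t$. Your contradiction formulation is just the contrapositive of the paper's direct chain $f(2t\log t)\ge t\ge f(y)\Rightarrow 2t\log t\ge y$ (note the paper's wording ``non-increasing'' is a slip for ``non-decreasing'').
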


\begin{proof}
  Our property is trivial when $y\le 2e$.
  Notice that the function $f:y\mapsto y/\log y$ is non-increasing when $y\ge
  e$. We find that $f(2t\log t)\ge t\ge f(y)$, whence $2t\log t\ge y$
  as sought.
\end{proof}

\begin{proof}[Proof of Theorem~\ref{Extension}]
  For typographical simplification, we define
  \begin{equation}
    \label{defB}
    B = \biggl(\frac{N+\delta^{-1}}{\log N}\sum_{p\le
      N}|{}u_p|^2\biggr)^{1/2}.
  \end{equation}
  We also set $\ell=2+h$.
  For any $\xi>0$, we examine  the set
  \begin{equation}
    \label{defXclaxi}
    \Xcal_\xi=\biggl\{x\in\Xcal/\biggl|\sum_{ p\le
      N}{}u_p e(xp)\biggr|
    \ge \xi B\biggr\}.
  \end{equation}
  By the Cauchy-Schwartz inequality and Lemma~\ref{RS}, we see that $\xi \le
  c_1=\min(5/4,1+\tfrac{3}{2\log N})$ or else, the set
  $\Xcal_\xi$ is empty.
  We consider
  (as in \cite{Harper*16}, bottom of page~1141, by A.J. Harper)
  \begin{equation}
    \label{defGammaxi}
    \Gamma(\xi)=\sum_{x\in\Xcal_\xi}
    \biggl|\sum_{p\le N}{}u_p e(xp)\biggr|
    .
  \end{equation}
  We write it as
  $
    \Gamma(\xi)
    =\sum_{x\in\Xcal_\xi}
    c(x)\sum_{p\le N}{}u_pe(xp)
  $
  for some $c(x)$ of modulus~1 and develop it in
  \begin{equation*}
    \Gamma(\xi)
    =
    \sum_{p\le N}{}u_p\sum_{x\in\Xcal_\xi}c(x)e(xp).
  \end{equation*}
  We apply Cauchy's inequality to this expression to get
  \begin{equation*}
    \Gamma(\xi)^2
    \le \sum_{p\le N}|{}u_p|^2
    \sum_{p\le N}\biggl|\sum_{x\in\Xcal_\xi}c(x)e(xp)\biggr|^2
    \le 280\, B^2|\Xcal_\xi|\log(2|\Xcal_\xi|)
  \end{equation*}
  by Theorem~\ref{Bel}.
  It follows from this upper bound that
  \begin{equation*}
    \xi^2|\Xcal_\xi|^2B^2
    \le \Gamma(\xi)^2
    \le 280\, 
   B^2|\Xcal_\xi|\log(2|\Xcal_\xi|)
  \end{equation*}
  whence
  \begin{equation*}
    2|\Xcal_\xi|/\log(2|\Xcal_\xi|)\le 560\, /\xi^2.
  \end{equation*}
  We convert this inequality via Lemma~\ref{easy} in
  $2|\Xcal_\xi|\le 1120\xi^{-2}\log(560/\xi^2)$.

  We can now turn towards the proof of the stated inequality and
  select $\xi_j=c_1/c^j$ for some $c>1$ that we will select later. We get
  \begin{align*}
    \sum_{x\in\Xcal}
    \biggl|\sum_{p\le N}{}u_p e(xp)\biggr|^\ell/B^\ell
    &\le
      c_1^\ell |\Xcal_{\xi_0}|
      +
      \sum_{j\ge1}
      \frac{c_1^\ell}{c^{\ell j}}(|\Xcal_{\xi_j}|-|\Xcal_{\xi_{j-1}}|)
    \\&\le 560(1-c^{-\ell})
    \sum_{j\ge0}
      \frac{c_1^{\ell-2}(\log(560)-2\log c_1+2j\log c)}{c^{(\ell-2) j}}.
    \\&\le 560
    \sum_{j\ge0}
      \frac{c_1^{\ell-2}(7+2j\log c)}{c^{(\ell-2) j}}.
  \end{align*}
  We check that
  \begin{equation*}
    560
    \sum_{j\ge0}
    \frac{c_1^{\ell-2}\times 7}{c^{(\ell-2) j}}
    =\frac{560\times 7\times c_1^{\ell-2}}{1-c^{2-\ell}}
  \end{equation*}
  and that
  \begin{align*}
    560
    \sum_{j\ge1}
    \frac{c_1^{\ell-2} j\,2\log c}{c^{(\ell-2) j}}
    &\le
    1120 \frac{(\log c)}{c^{\ell-2}}c_1^{\ell-2}
    \sum_{j\ge1}
    \frac{j}{c^{(\ell-2) (j-1)}}
    \\&\le \frac{1120\times (c_1/c)^{\ell-2}\log c}{(1-c^{2-\ell})^2}.
  \end{align*}
  When $\ell\ge3$, we select $c=2$, getting after some numerical work
  \begin{equation*}
    \sum_{x\in\Xcal}
    \biggl|\sum_{p\le N}{}u_p e(xp)\biggr|^{2+h}
    \le
    (3920(1+\tfrac{3}{2\log N})^h+2000)
    \biggl(\frac{N+\delta^{-1}}{\log N}\sum_{p\le
      N}|{}u_p|^2\biggr)^{1+h/2}.
  \end{equation*}
  When $\ell\in(2,3)$, we select $c=\exp(1/h)$, getting similarly
  \begin{equation*}
    \sum_{x\in\Xcal}
    \biggl|\sum_{p\le N}{}u_p e(xp)\biggr|^{2+h}
    \le
    \biggl( 6300(1+\tfrac{3}{2\log N})^h
    +\frac{2900}{h}\biggr)
    \biggl(\frac{N+\delta^{-1}}{\log N}\sum_{p\le
      N}|{}u_p|^2\biggr)^{1+h/2}.
  \end{equation*}
  Our theorem follows readily.
\end{proof}

\section{Large sieve bound on small sets. Proof of Theorem~\ref{Extensionprecise}}

\begin{proof}[Proof of Theorem~\ref{Extensionprecise}]
  This is a classical argument of duality. We write
  \begin{equation*}
    V=\sum_{x\in\Xcal}
    \biggl|\sum_{p\le N}{}u_p e(xp)\biggr|^2
    =\sum_{x\in\Xcal}\sum_{p\le N}{}u_p\overline{S(x)}e(xp)
  \end{equation*}
  where $S(x)=\sum_{1\le p\le N}{}u_p e(xp)$. On using the
  Cauchy-Schwarz inequality, we get
  \begin{equation*}
    V^2\le \sum_{p\le N}|{}u_p|^2
   \sum_{p\le N} \Bigl|\sum_{x\in\Xcal}\overline{S(x)}e(xp)\Bigr|^2.
 \end{equation*}
 We invoke Theorem~\ref{Bel} and notice to control $\|S\|_1^2/\|S\|_2^2$ that
 \begin{equation*}
   \Bigl(\sum_{x\in\Xcal}|\overline{S(x)}|\Bigr)^2
   \le |\Xcal|\sum_{x\in\Xcal}|\overline{S(x)}|^2.
 \end{equation*}
 This leads to
 \begin{equation*}
   V^2\le 280\, \frac{N+\delta^{-1}}{\log N}\sum_{p\le N}|{}u_p|^2
   \sum_{x\in\Xcal}|S(x)|^2\log(2|\Xcal|).
 \end{equation*}
 On simplifying by $\sum_{x\in\Xcal}|S(x)|^2$ (after discussing
 whether it vanishes or not), we get our estimate.
\end{proof}

\section{Optimality and uniform boundedness. Proof of Theorem~\ref{mainBourgain}}

\begin{proof}[Proof of Theorem~\ref{mainBourgain}]

We assume that $N\ge 10^6$ and set
\begin{equation}
  \label{eq:1}
  S(\alpha)=\sum_{p\le N}e(p\alpha).
\end{equation}
The argument employed at the bottom of page 1626 of \cite{Green*05}
by B.~Green
is not enough for us. Instead, we got our inspiration from the argument developped by
R.C. Vaughan in~\cite{Vaughan*88}. It runs as follows. We first notice that
\begin{equation*}
  \biggl|\sum_{a\mode q}S\Bigl(\frac aq+\beta\Bigr)\biggr|
  \le
  \biggl(\sum_{a\mode q}
  \Bigl|S\Bigl(\frac aq+\beta\Bigr)\Bigr|^{\ell}\biggr)^{1/\ell}
  \biggl(\sum_{a\mode q}1\biggr)^{(\ell-1)/\ell}.
\end{equation*}
A direct inspection shows that
\begin{equation*}
  \sum_{a\mode q}S\Bigl(\frac aq+\beta\Bigr)
  =\mu(q) S(\beta)+T(q,\beta)
\end{equation*}
where
\begin{equation}
  \label{defTqbeta}
  T(q,\beta)= \sum_{\substack{p:p|q}}e(p\beta)(c_q(p)-\mu(q)).
\end{equation}
 The bound
$|c_q(n)|\le \varphi((n,q))$ for the Ramanujan sum $c_q(n)$ (use for
instance the von Sterneck expression~\eqref{vSe}) gives us
\begin{equation}
  \label{eq:10}
  |T(q,\beta)|\le \sum_{\substack{p:p|q}}(p-1+1)\le q.
\end{equation}
The last inequality follows from the trivial property that a sum of
positive integers is certainly not more than its product.
We next get a lower bound for $S(\beta)$ by writing
\begin{equation*}
  1-e(\beta p)=2i\pi\int_0^{\beta p}e(t)dt
\end{equation*}
whence
\begin{equation}
  \label{you}
  |S(\beta)|\ge S(0)-2\pi \beta N S(0)\ge (1-2\pi\beta N) S(0)
  \ge (1-2\pi\beta N)\frac{N}{\log N}
\end{equation}
by Lemma~\ref{RS}. When $|\beta|\le 1/(7N)$, this leads to
$|S(\beta)|\ge c_2 N/\log N$ with $c_2=1-2\pi/7$, and, when $q$ is
squarefree and not more than $\sqrt{N}$, to
\begin{equation}
  \label{eq:13}
  \bigl|\mu(q) S(\beta)+T(q,\beta)\bigr|
  \ge
  c_2\frac{N}{\log N}-\sqrt{N}\ge \frac{N}{12\log N}.
\end{equation}
We thus get, when $|\beta|\le 1/(7N)$,
\begin{equation*}
  \sum_{a\mode q}
  \Bigl|S\Bigl(\frac aq+\beta\Bigr)\Bigr|^{\ell}
  \ge
  \frac{\mu^2(q)}{\varphi(q)^{\ell-1}}
  |S(\beta)+T(q,\beta)|^\ell\ge
  \frac{\mu^2(q)}{\varphi(q)^{\ell-1}}
  \biggl(\frac{N}{12\log N}\biggr)^\ell.
\end{equation*}
Thus
\begin{align*}
  \int_0^1 |S(\alpha)|^\ell d\alpha
  &\ge
  \sum_{q\le \sqrt{N}} \sum_{a\mode q}\mu^2(q)
  \int_{\frac aq-\frac{1}{7N}}^{\frac{a}{q}+\frac{1}{7N}}
  \Bigl|S\Bigl(\frac aq+\beta\Bigr)\Bigr|^{\ell}d\beta
  \\&\ge
  \frac{2}{7N}\sum_{q\le \sqrt{N}} \frac{\mu^2(q)}{\varphi(q)^{\ell-1}}
  \biggl(\frac{N}{12\log N}\biggr)^\ell.
\end{align*}
By Lemma~\ref{appGh}, we conclude
that
\begin{equation*}
  \int_0^1 |S(\alpha)|^\ell d\alpha
  \ge
  \frac{1-\sqrt{N}^{2-\ell}}{\ell-2}\frac{2}{7N}
  \biggl(\frac{N}{12\log N}\biggr)^\ell.
\end{equation*}
We thus find that
\begin{equation}
  \label{refK}
    \int_0^1
    \biggl|\sum_{p\le N}{}u_p e(p\alpha)\biggr|^{\ell}d\alpha
    \le
    K(\ell)\biggl(\frac{\log N}{N}\sum_{p\le
      N}|{}u_p|^2\biggr)^{\ell/2}
    \int_0^1  \biggl|\sum_{p\le N} e(p\alpha)\biggr|^{\ell} d\alpha
\end{equation}
where $\ell=2+h$ and
\begin{align*}
  K(2+h)
  &=
  7000\frac{(1+\tfrac{3}{2\log N})^h+1/h}{N}
  \biggl(\frac{2N^2}{(\log N)^2}\biggr)^{\ell/2}
   \frac{h}{1-\sqrt{N}^{-h}}\frac{7N}{2}
  \biggl(\frac{N}{12\log N}\biggr)^{-\ell}
  \\&=
  7000\biggr((1+\tfrac{3}{2\log N})^h+1/h\biggl)
  2^{\ell/2}
   \frac{h}{1-\sqrt{N}^{-h}}\frac{7}{2}
  12^{\ell}.
\end{align*}
When $h\ge1$,  we use
\begin{equation*}
  K(2+h)
  \le 24500\bigr(1.11^hh+1\bigl)
   \frac{1}{0.999}
   (12\sqrt{2})^{\ell}
   \le 10^7\cdot 20^\ell
 \end{equation*}
 where the worst case is reached next to $\ell=18.19\cdots$.
 When $h<1$, the quantity $K(2+h)$
is bounded above by
\begin{equation*}
  \frac{3\cdot 10^{8}}{h}\frac{h}{1-\sqrt{N}^{-h}}=
  \frac{3\cdot 10^{8}}{1-\sqrt{N}^{-h}}.
\end{equation*}
This is bounded above by $8\cdot 10^{8}$ when $h\ge 1/\log N$.
When $0\le h\le 1/\log N$, we use
\begin{align*}
  \int_0^1
  \biggl|\sum_{p\le N}{}u_p e(p\alpha)\biggr|^{\ell}d\alpha
  &\le \left(\pi(N)\sum_{p\le N}|u_p|^2\right)^{h/2}\int_0^1
  \biggl|\sum_{p\le N}{}u_p e(p\alpha)\biggr|^2 d\alpha
  \\&\le 
  \sqrt{5/4}\biggl(\frac{\log N}{N}\sum_{p\le
  N}|{}u_p|^2\biggr)^{\ell/2}
  \biggl(\frac{N}{\log N}\biggr)^{\ell}N^{-1}\log N
\end{align*}
which leads to~\eqref{refK} with
\begin{align*}
  K(2+h)
  &=\frac{\sqrt{5/4}}{N}\biggl(\frac{N}{\log N}\biggr)^{\ell}
  \frac{h\log N}{1-\sqrt{N}^{-h}}\frac{7N}{2}
    \biggl(\frac{N}{12\log N}\biggr)^{-\ell}
  \\&\le 
  \frac{7\sqrt{5/4}\cdot 12^3}{2(1-\exp(-1/2))}\le 8\cdot 10^8.
\end{align*}
Theorem~\ref{mainBourgain} follows readily.
\end{proof}

\section{Small sets large sieve estimates. Proof  of Corollary~\ref{Extensionprecisebis}}

\begin{proof}[Proof of Corollary~\ref{Extensionprecisebis}]
  The split the Farey sequence
  \begin{align}
    \label{eq:2}
    F(Q_0)
    &=\Bigl\{\frac{a}{q}, 1\le a\le q\le Q_0,(a,q)=1\Bigr\}
    \\&=\big\{0<x_1<x_2<\ldots<x_K=1\bigr\}\notag
  \end{align}
  in $F_1(Q_0)=\{x_{2i}, 1\le i\le K/2\}$ union
  $F_2(Q_0)=\{x_{2i+1}, 1\le 0\le (K-1)/2\}$. We recall that the
  distance between two consecutive points $a/q$ and $a'/q'$ in
  $F(Q_0)$ is $1/(qq')$; this is at least as large as $
  \frac{1}{qQ_0}+\frac{1}{q'Q_0}$ by the known property $q+q'\ge
  Q_0$. Hence two intervals
  $[\frac{a_1}{q_1}-\frac{1}{q_1Q_0},\frac{a_1}{q_1}+\frac{1}{q_1Q_0}]$
  and
  $[\frac{a_2}{q_2}-\frac{1}{q_2Q_0},\frac{a_2}{q_2}+\frac{1}{q_2Q_0}]$
  with $\frac{a_1}{q_1}, \frac{a_2}{q_2}\in F_1(Q_0)$ are separated by
  at least $1/Q_0^2$. We check this is also true when seen on the unit
  circle: the largest point of $F(Q_0)$ is $1$ and its smallest is
  $\frac{1}{[Q_0]}$. The same applies to $F_2(Q_0)$.
  We finally notice that $|F(Q_0)|\le Q_0(Q_0+1)/2\le Q_0^2$.

  To prove our corollary, for every $x_{2i}\in F_1(Q_0)$, we select a
  point $\tilde{x}_{2i}$ such that
  \begin{equation}
    \label{eq:9}
    \Bigl|\sum_{p\le N}u_p e(p\tilde{x_{2i}})\Bigr|
    =\max_{|x-x_{2i}|\le \frac{1}{qQ_0}}\Bigl|\sum_{p\le N}u_p e(p{x})\Bigr|
  \end{equation}
  and apply Theorem~\ref{Extensionprecise} to the set
  $\tilde{X}_1=\{\tilde{x}_{2i}\}$. We proceed similarly with $F_2(Q_0)$.
  The last details are left to
  the readers.
\end{proof}

\section{Extension to primes in intervals}

We discuss here how our results extend from the case of primes in
the initial interval to primes in $[M+1, M+N]$ for some non-negative~$M$. During
the proof of Theorem~\ref{Bel}, we used
the property that our sequence has at most $N^{1/4}$ elements below
$N^{1/4}$, and that the remaining ones are prime to any integer below $N^{1/4}$.
This is certainly still true when looking at intervals.

\begin{thm}
  \label{Belplus} Let $N\ge 1000$.
  Let $B$ be a $\delta$-well spaced subset of
  $\mathbb{R}/\mathbb{Z}$. For any function $f$
  on $\mathcal{B}$, we have
  \begin{equation*}
    \sum_{M+1\le p\le M+N}\biggl|\sum_{b\in \mathcal{B}}f(b)e(bp)\biggr|^2
    \le 280
    (N+\delta^{-1})\|f\|_2^2\frac{\log
    (2\|f\|_1^2/\|f\|_2^2)}{\log N}.
  \end{equation*}
\end{thm}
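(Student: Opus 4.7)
The plan is to reproduce verbatim the proof of Theorem~\ref{Bel}, with the single modification that the Beurling--Selberg majorant used in the Poisson summation step is recentered at the interval $[M+1,M+N]$ rather than at $[1,N]$. The central observation is that the argument of Theorem~\ref{Bel} uses essentially three features of the index set $\{p\le N\}$: the crude bound $\pi(z)\le z$ on small primes, the fact that a prime $p>z$ has no prime factors in $[z_0,z)$ (so $\beta_{z_0,z}(p)\ge 1$), and the existence of a non-negative majorant of the characteristic function of the interval whose Fourier transform is compactly supported and takes the value $N+\delta^{-1}$ at the origin. All three features are invariant under translation of the interval.

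More concretely, I would set $z=N^{1/4}$ and $z_0=(2\|f\|_1^2/\|f\|_2^2)^2$, and dispose of the case $z_0>z$ by the standard dual large sieve on $\{M+1\le p\le M+N\}$, exactly as in Theorem~\ref{Bel}. For the small-prime contribution, the estimate
\begin{equation*}
\sum_{\substack{M+1\le p\le M+N,\\ p\le z}}\Bigl|\sum_{b}f(b)e(bp)\Bigr|^2\le z\|f\|_1^2
\end{equation*}
remains valid since there are at most $z$ primes $\le z$ in \emph{any} interval. The remaining primes $p\in (z,M+N]\cap[M+1,M+N]$ satisfy $p>z\ge z_0$, so $\beta_{z_0,z}(p)\ge 1$ by definition of the enveloping sieve in Theorem~\ref{Fourierbetan}.

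For the main term $W$, I invoke Lemma~\ref{BeurlingF} with the parameter $M$ of that lemma chosen so that $\psi\ge\1_{[M+1,M+N]}$, and with $\delta$ replaced by $\delta_1=\min(\delta,1/(2z^4))$; this produces a non-negative $\psi$ whose Fourier transform is supported in $[-\delta_1,\delta_1]$ and for which $\hat{\psi}(0)=N+\delta_1^{-1}$. Inserting the enveloping sieve and $\psi$ as in the original proof yields
\begin{equation*}
W\le \sum_{\substack{q\le z^2,\\ (q,P(z_0))=1}}w_q(z;z_0)\sum_{a\mode q}\sum_{b_1,b_2}f(b_1)\overline{f(b_2)}\sum_{n\in\mathbb{Z}}e((b_1-b_2)n)e(an/q)\psi(n),
\end{equation*}
and the subsequent splitting $W=W(q<z_0)+W(q\ge z_0)$ goes through word for word, because the Poisson summation argument only sees $\hat\psi$ and never the location of the support of $\psi$. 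The contributions from $q\ge z_0$ and $q<z_0$ are then bounded exactly as in Theorem~\ref{Bel} using Lemma~\ref{Courageous} and Lemma~\ref{EstGdown}, respectively.

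I anticipate no real obstacle here beyond verifying, at the point where $\delta_1^{-1}$ is converted back to $\delta^{-1}$, that the inequality $N+\delta_1^{-1}\le 5(N+\delta^{-1})$ still holds (it does, with the same proof as before, since $z=N^{1/4}$ and $N\ge 1000$). Hence the same numerical constant $280$ propagates through, and Theorem~\ref{Belplus} follows. The whole proof is really a statement that the only place in Theorem~\ref{Bel} where the interval $[1,N]$ is used explicitly is in the choice of majorant, and Lemma~\ref{BeurlingF} was already set up to accommodate an arbitrary starting point $M$.
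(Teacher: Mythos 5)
Your proposal is correct and is essentially the paper's own argument: the paper proves Theorem~\ref{Belplus} by simply noting that the proof of Theorem~\ref{Bel} only uses that the sequence has at most $N^{1/4}$ elements below $N^{1/4}$ and that the rest are coprime to every integer below $N^{1/4}$, both of which persist for primes in $[M+1,M+N]$, with Lemma~\ref{BeurlingF} already stated for an arbitrary left endpoint $M$. You have merely spelled out in more detail the translation-invariance checks that the paper leaves implicit.
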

\noindent
When defining
$c_1$ in the proof of Theorem~\ref{Extension}, we used an upper bound
for the number of elements in our set. The 
version of the Brun-Titchmarsh inequality proved by H.~Montgomery \&
R.C.~Vaughan in~\cite{Montgomery-Vaughan*74} enables us to use
$c_1=2$.
After some trivial modifications, we reach the following.
\begin{thm}
  \label{Extensionplus}
  Let $\Xcal$ be a $\delta$-well spaced subset of
  $\mathbb{R}/\mathbb{Z}$. Assume $N\ge 1000$ and let $h>0$. We have
  \begin{equation*}
    \sum_{x\in\Xcal}
    \biggl|\sum_{M+1\le p\le M+N}\mkern-16mu{}u_p e(xp)\biggr|^{2+h}
    \mkern-14mu\le
    14000\bigl(2^h+1/h\bigr)
    \biggl(\frac{N+\delta^{-1}}{\log N}\sum_{M+1\le p\le
      M+N}\mkern-16mu|{}u_p|^2\biggr)^{1+h/2}.
  \end{equation*}
\end{thm}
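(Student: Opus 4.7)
The plan is to run the proof of Theorem~\ref{Extension} essentially line by line, making only the two substitutions already flagged in the paragraph preceding the statement: replace Theorem~\ref{Bel} by its interval analogue Theorem~\ref{Belplus}, and replace the Rosser--Schoenfeld bound $\pi(N)\le (1+\tfrac{3}{2\log N})N/\log N$ from Lemma~\ref{RS} by the Montgomery--Vaughan form of the Brun--Titchmarsh inequality, which yields $\pi(M+N)-\pi(M)\le 2N/\log N$ and hence the cleaner constant $c_1=2$.

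Concretely, I would set
$$B=\biggl(\frac{N+\delta^{-1}}{\log N}\sum_{M+1\le p\le M+N}|u_p|^2\biggr)^{1/2},\qquad \ell=2+h,$$
and introduce the level sets $\Xcal_\xi=\{x\in\Xcal:|\sum_{M+1\le p\le M+N}u_p e(xp)|\ge \xi B\}$. Cauchy--Schwarz combined with Brun--Titchmarsh shows that $\Xcal_\xi=\emptyset$ whenever $\xi>c_1=2$. Following Harper's trick I would form $\Gamma(\xi)=\sum_{x\in\Xcal_\xi}|\sum_p u_p e(xp)|$, insert phases $c(x)$ of modulus one, and apply Cauchy--Schwarz together with Theorem~\ref{Belplus} to get $\Gamma(\xi)^2\le 280\,B^2|\Xcal_\xi|\log(2|\Xcal_\xi|)$; combined with the trivial lower bound $\Gamma(\xi)\ge \xi B|\Xcal_\xi|$ and Lemma~\ref{easy}, this gives $|\Xcal_\xi|\le 1120\,\xi^{-2}\log(560/\xi^2)$. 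The remainder is the same dyadic decomposition $\xi_j=c_1/c^j$ with $c=2$ for $\ell\ge 3$ and $c=\exp(1/h)$ for $\ell\in(2,3)$ that was used in the proof of Theorem~\ref{Extension}; summing the geometric series and tracking constants produces the stated bound.

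The only point requiring genuine verification is the availability of Theorem~\ref{Belplus} itself. As the paragraph preceding its statement indicates, the proof of Theorem~\ref{Bel} only uses that the set of primes below $N$ has at most $N^{1/4}$ members below $N^{1/4}$ and that its remaining members are coprime to $P(N^{1/4})$; both properties are trivially true for primes in any interval $[M+1,M+N]$. Everything else in that proof is translation-invariant: well-spacing of $\Xcal$ is unaffected, the enveloping sieve from Theorem~\ref{Fourierbetan} is insensitive to the location of the integers being sieved, and Lemma~\ref{BeurlingF} already allows an arbitrary centre~$M$. I therefore expect no genuine obstacle; the appearance of $2^h$ in place of $(1+\tfrac{3}{2\log N})^h$ and the doubling of the numerical constant from $7000$ to $14000$ simply absorb the increase of $c_1$ from $\min(5/4,1+\tfrac{3}{2\log N})$ to $2$.
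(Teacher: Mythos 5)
Your proposal matches the paper's own (deliberately terse) argument: the paper likewise obtains Theorem~\ref{Extensionplus} by rerunning the proof of Theorem~\ref{Extension} verbatim with Theorem~\ref{Bel} replaced by Theorem~\ref{Belplus} and with the Montgomery--Vaughan form of Brun--Titchmarsh supplying $c_1=2$ in place of $\min(5/4,1+\tfrac{3}{2\log N})$, and it justifies Theorem~\ref{Belplus} by exactly the translation-invariance observations you list. Nothing is missing; this is the same route.
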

\begin{cor}
  \label{mainCorbis}
  Assume $N\ge 1000$ and let $h>0$. We have
  \begin{equation*}
    \int_0^1
    \biggl|\sum_{M+1\le p\le M+N}\mkern-16mu{}u_p e(p\alpha)\biggr|^{2+h}
    \mkern-11mu
    d\alpha\le
    14000\frac{2^h+1/h}{N}
    \biggl(\frac{2N}{\log N}\sum_{M+1\le p\le
      M+N}\mkern-16mu|{}u_p|^2\biggr)^{1+h/2}.
  \end{equation*}
\end{cor}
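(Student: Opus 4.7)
The plan is to mirror the deduction of Corollary~\ref{mainCor} from Theorem~\ref{Extension}, but this time starting from Theorem~\ref{Extensionplus} so that the interval-of-primes case is handled. The natural test set is the arithmetic progression $\Xcal_\beta = \{\beta + k/N : 0 \le k \le N-1\}$ indexed by a parameter $\beta \in [0, 1/N]$.

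First I would observe that $\Xcal_\beta$ is $\delta$-well spaced with $\delta = 1/N$, so that $N + \delta^{-1} = 2N$. For any fixed $\beta$, applying Theorem~\ref{Extensionplus} with $\Xcal = \Xcal_\beta$ yields
\begin{equation*}
\sum_{k=0}^{N-1} \biggl|\sum_{M+1\le p\le M+N} u_p\, e\bigl((\beta + k/N)p\bigr)\biggr|^{2+h}
\le 14000\bigl(2^h+1/h\bigr)\biggl(\frac{2N}{\log N}\sum_{M+1\le p\le M+N} |u_p|^2\biggr)^{1+h/2}.
\end{equation*}
Crucially, the right-hand side is independent of $\beta$.

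Next I would integrate this inequality over $\beta \in [0, 1/N]$. The left-hand side becomes
\begin{equation*}
\int_0^{1/N} \sum_{k=0}^{N-1} \biggl|\sum_{M+1\le p\le M+N} u_p\, e\bigl((\beta + k/N)p\bigr)\biggr|^{2+h} d\beta = \int_0^1 \biggl|\sum_{M+1\le p\le M+N} u_p\, e(p\alpha)\biggr|^{2+h} d\alpha,
\end{equation*}
after the change of variable $\alpha = \beta + k/N$ on each subinterval $[k/N,(k+1)/N]$. The right-hand side simply acquires the factor $1/N$, which is exactly the $1/N$ appearing in the statement of Corollary~\ref{mainCorbis}. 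Putting the two sides together gives the claim immediately.

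There is no substantial obstacle: the work has already been absorbed into Theorem~\ref{Extensionplus}, and this corollary is just an averaging argument using that a well-chosen arithmetic progression of length $N$ with spacing $1/N$ fills out the circle. The only thing worth checking carefully is that the $\delta$-well spacing constant really is $1/N$ (rather than something smaller that would inflate $\delta^{-1}$), which is immediate from the definition.
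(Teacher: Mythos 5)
Your proposal is correct and is exactly the argument the paper intends: it is the same averaging device used to deduce Corollary~\ref{mainCor} from Theorem~\ref{Extension} (take $\Xcal=\{\beta+k/N,\,0\le k\le N-1\}$, which is $1/N$-well spaced so that $N+\delta^{-1}=2N$, apply Theorem~\ref{Extensionplus}, and integrate over $\beta\in[0,1/N]$), now applied to the interval version. The constants match the statement exactly, so there is nothing to add.
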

\noindent
Modifying the proof of Theorem~\ref{mainBourgain} is more delicate as
it requires bounding the trigonometric polynomial $S$ from below
in~\eqref{you} to discard the contribution of $T(q,\beta)$. A simple
solution is to assume that all the elements of our sequence are further
larger than $\sqrt{N}$, which is readily granted by assuming that~$M\ge \sqrt{N}$.

\begin{thm}
  \label{mainBourgainplus}
  There exists a constant $C>0$ such that the following holds.
  Assume $N\ge 10^{6}$, $M\ge\sqrt{N}$ and let $\ell\ge2$. We have
  \begin{equation*}
    \biggl(\int_0^1
    \biggl|\sum_{\substack{M+1\le p\le M+N}}
    \mkern-18mu{}u_p e(p\alpha)\biggr|^{\ell}d\alpha
    \biggr)^{1/\ell}
    \mkern-14mu
    \le C
    \sqrt{\frac{N/\log N}{1+R}}
    \biggl(\int_0^1
    \biggl|\sum_{M+1\le p\le M+N}\mkern-22mu
    e(p\alpha)\biggr|^{\ell}d\alpha
    \biggr)^{1/\ell}
  \end{equation*}
  as soon as $\sum_{M+1\le p\le
      M+N}|{}u_p|^2\le \sum_{M+1\le p\le
      M+N}1=R$.
\end{thm}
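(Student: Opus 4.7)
The plan is to mirror the proof of Theorem~\ref{mainBourgain}, with two modifications dictated by the interval setting and by the hypothesis $M\ge\sqrt{N}$. Set $S(\alpha)=\sum_{M<p\le M+N}e(p\alpha)$. Following the original argument, for every squarefree $q$ one has $\sum_{a\mode q}S(a/q+\beta)=\mu(q)S(\beta)+T(q,\beta)$, where
\begin{equation*}
  T(q,\beta)=\sum_{\substack{p|q,\\ M<p\le M+N}}(c_q(p)-\mu(q))e(p\beta).
\end{equation*}
When $q\le\sqrt{N}$, every prime divisor of $q$ is at most $\sqrt{N}\le M$, hence lies strictly below $M+1$; the sum defining $T(q,\beta)$ is empty and $T(q,\beta)=0$ identically. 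This removes the need for the bounds~\eqref{eq:10} and~\eqref{eq:13} and is the key simplification granted by $M\ge\sqrt{N}$.

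For the lower bound on $|S(\beta)|$ I factor $S(\beta)=e(M\beta)\sum_{M<p\le M+N}e((p-M)\beta)$. Since $|1-e(t)|\le 2\pi|t|$ and $p-M\le N$, this yields $|S(\beta)|\ge R(1-2\pi|\beta|N)$, hence $|S(\beta)|\ge(1-2\pi/7)R$ whenever $|\beta|\le 1/(7N)$, a threshold independent of~$M$. The Farey fractions $a/q$ with $q\le\sqrt{N}$ are at least $1/N$ apart, so the arcs $a/q+[-1/(7N),1/(7N)]$ are pairwise disjoint. Applying Hölder's inequality to $\sum_{a\mode q}S(a/q+\beta)=\mu(q)S(\beta)$ and summing over squarefree $q\le\sqrt{N}$ via Lemma~\ref{appGh} therefore gives
\begin{equation*}
  \int_0^1|S(\alpha)|^\ell d\alpha
  \ge
  \frac{2(1-2\pi/7)^\ell R^\ell}{7N}\cdot\frac{1-N^{-h/2}}{h}
\end{equation*}
with $\ell=2+h$.

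Combining this with the upper bound on $\int_0^1|\sum_{M<p\le M+N}u_pe(p\alpha)|^\ell\,d\alpha$ supplied by Corollary~\ref{mainCorbis} produces, after elementary algebra, a ratio of the shape $C_0^\ell(N/\log N)^{\ell/2}R^{-\ell/2}$, valid in the range $h\ge 1/\log N$. Since $R\le 2N/\log N$ by Brun--Titchmarsh, $R$ and $1+R$ are comparable (the case $R=0$ being trivial), so $R^{-\ell/2}\asymp(1+R)^{-\ell/2}$ and the desired inequality follows. The residual range $0<h<1/\log N$ is handled exactly as at the end of the proof of Theorem~\ref{mainBourgain}: the trivial bound $|\sum u_pe(p\alpha)|\le\sqrt{R\sum|u_p|^2}\le R$ together with Parseval gives $\int_0^1|\sum u_pe(p\alpha)|^\ell d\alpha\le R^{1+h}$, while Hölder yields $\int_0^1|S(\alpha)|^\ell d\alpha\ge(\int_0^1|S|^2)^{\ell/2}=R^{\ell/2}$, so the ratio is at most $R^{h/2}\le N^{1/(2\log N)}\le e^{1/2}$.

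The main obstacle is therefore purely clerical: assembling the various $h$- and $\ell$-dependent factors $h(2^h+1/h)$, $(1-N^{-h/2})^{-1}$ and $(1-2\pi/7)^{-\ell}$ into a single numerical constant $C$ uniform in $\ell\ge 2$, exactly in the spirit of the case analysis carried out at the end of the proof of Theorem~\ref{mainBourgain}.
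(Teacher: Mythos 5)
Your proposal is correct and follows the same route as the paper: the hypothesis $M\ge\sqrt{N}$ is used exactly to make $T(q,\beta)$ vanish for $q\le\sqrt{N}$, the lower bound $\int_0^1|S|^\ell\,d\alpha\gg \frac{1}{N}\frac{1-N^{-h/2}}{h}S(0)^\ell$ is obtained from the major arcs via H\"older and Lemma~\ref{appGh}, and the conclusion comes from Corollary~\ref{mainCorbis} with a separate elementary treatment of $h\in[0,1/\log N]$. In fact you supply more detail than the paper does for the small-$h$ range and for the comparison of $R$ with $N/\log N$ (where the Montgomery--Vaughan bound $R\le 2N/\log N$ is what keeps the factor $\sqrt{(N/\log N)/(1+R)}$ bounded below, while $R\ge1$ gives $R\asymp 1+R$).
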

\noindent
So the uniform Hardy-Littlewood majorant property holds for primes in
the interval $[M+1,M+N]$ provided the number of such primes is
$\gg N/\log N$.
\begin{proof}[Proof of Theorem~\ref{mainBourgainplus}]
  We set
  \begin{equation}
    \label{eq:14}
    S(\alpha)=\sum_{M+1\le p\le M+N}e(p\alpha).
  \end{equation}
  We can assume that $S(0)\ge 1$.
  On following the proof of Theorem~\ref{mainBourgain}, we readily
  reach, when $|\beta|\le 1/(7N)$,
  \begin{equation}
    \label{eq:15}
    \sum_{a\mode q}
    \Bigl|S\Bigl(\frac aq+\beta\Bigr)\Bigr|^{\ell}
    \ge\frac{\mu^2(q)}{\varphi(q)^{\ell-1}}
    S(0)^\ell.
  \end{equation}
  This again leads to
  \begin{equation*}
    \int_0^1 |S(\alpha)|^\ell d\alpha
    \ge \frac{1-\sqrt{N}^{2-\ell}}{\ell-2}\frac{2}{7N}S(0)^\ell.
  \end{equation*}
  Corollary~\ref{mainCorbis} gives us with $\ell=2+h$
  \begin{align*}
    \int_0^1
    \biggl|\sum_{M+1\le p\le M+N}\mkern-16mu{}u_p e(p\alpha)\biggr|^{\ell}
    d\alpha
    &\le
    14000\frac{2^h+1/h}{N}
      \biggl(\frac{2N}{\log N}S(0)\biggr)^{\ell/2}
    \\&\le
    10^5
    \frac{h2^h+1}{1-\sqrt{N}^{-h}}\biggl(\frac{2N}{S(0)\log N}\biggr)^{\ell/2}
    \int_0^1 |S(\alpha)|^\ell d\alpha.
  \end{align*}
  The factor $(\frac{h2^h+1}{1-\sqrt{N}^{-h}})^{1/\ell}$ is bounded
    when
    $h\in[1/\log N,\infty)$. We treat separately the case
    $h\in[0,1/\log N]$.
\end{proof}

\noindent
Theorem~\ref{Extensionprecise} and
Corollary~\ref{Extensionprecisebis} go through with no
modifications, and are in this manner closer to~\eqref{oldram}.
\begin{thm}
  \label{Extensionpreciseplus}
  Let $\Xcal$ be a $\delta$-well spaced subset of
  $\mathbb{R}/\mathbb{Z}$ and $N\ge1000$.
  Let $({}u_p)_{M+1\le p\le M+N}$ be a sequence of complex numbers.
  We have
  \begin{equation*}
    \sum_{x\in\Xcal}
    \biggl|\sum_{M+1\le p\le M+N}\mkern-10mu{}u_p e(xp)\biggr|^2
    \le
     280 \frac{N+\delta^{-1}}{\log N}\sum_{M+1\le p\le M+N}\mkern-10mu|{}u_p|^2
    \log(2|\Xcal|).
  \end{equation*}
\end{thm}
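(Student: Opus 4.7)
The plan is to repeat verbatim the duality argument used in the proof of Theorem~\ref{Extensionprecise}, replacing the application of Theorem~\ref{Bel} by Theorem~\ref{Belplus}. Concretely, I would set
\begin{equation*}
  V=\sum_{x\in\Xcal}\biggl|\sum_{M+1\le p\le M+N}u_p\,e(xp)\biggr|^2
   =\sum_{M+1\le p\le M+N}u_p\sum_{x\in\Xcal}\overline{S(x)}\,e(xp)
\end{equation*}
where $S(x)=\sum_{M+1\le p\le M+N}u_p\,e(xp)$, and then apply the Cauchy--Schwarz inequality in the outer sum over $p$ to obtain
\begin{equation*}
  V^{2}\le \Bigl(\sum_{M+1\le p\le M+N}|u_p|^{2}\Bigr)
  \sum_{M+1\le p\le M+N}\Bigl|\sum_{x\in\Xcal}\overline{S(x)}\,e(xp)\Bigr|^{2}.
\end{equation*}

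Next, I would feed the inner double sum into Theorem~\ref{Belplus}, applied to the function $f(x)=\overline{S(x)}$ defined on $\Xcal$ (which is $\delta$-well spaced by hypothesis). This is where the shift from $[1,N]$ to $[M+1,M+N]$ matters; Theorem~\ref{Belplus} is designed exactly for this, with the same constant $280$ and the same $\log(2\|f\|_1^2/\|f\|_2^2)$ factor. To turn that logarithm into $\log(2|\Xcal|)$, I would use the trivial Cauchy--Schwarz bound $\|f\|_1^{2}\le |\Xcal|\,\|f\|_2^{2}$, i.e.
\begin{equation*}
   \Bigl(\sum_{x\in\Xcal}|S(x)|\Bigr)^{2}\le |\Xcal|\sum_{x\in\Xcal}|S(x)|^{2},
\end{equation*}
and monotonicity of $\log$.

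Combining the two inequalities yields
\begin{equation*}
  V^{2}\le 280\,\frac{N+\delta^{-1}}{\log N}\sum_{M+1\le p\le M+N}|u_p|^{2}\;\Bigl(\sum_{x\in\Xcal}|S(x)|^{2}\Bigr)\log(2|\Xcal|),
\end{equation*}
and after dividing by $\sum_{x\in\Xcal}|S(x)|^{2}=V$ (the case $V=0$ being trivial) the stated inequality follows immediately.

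There is no genuine obstacle: the only ingredient that required real work, namely the analogue of the fundamental estimate for shifted intervals, has been absorbed into Theorem~\ref{Belplus} in the previous paragraph of the paper (which itself goes through because the key sieve input---that there are at most $N^{1/4}$ primes below $N^{1/4}$ and the remaining primes are coprime to $P(N^{1/4})$---is translation invariant). Everything else is the standard duality manipulation that already appeared for the interval $[1,N]$.
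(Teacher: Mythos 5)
Your proposal is correct and matches the paper exactly: the paper gives no separate proof of Theorem~\ref{Extensionpreciseplus}, remarking only that the duality argument of Theorem~\ref{Extensionprecise} ``goes through with no modifications'' once Theorem~\ref{Bel} is replaced by Theorem~\ref{Belplus}. That is precisely the argument you wrote out, including the Cauchy--Schwarz step $\|f\|_1^2\le|\Xcal|\,\|f\|_2^2$ and the division by $V$ after handling the trivial case $V=0$.
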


\begin{cor}
  \label{Extensionprecisebisplus}
  Let $N\ge1000$ and $Q_0\in[2, \sqrt{N}]$.
  Let $({}u_p)_{M+1\le p\le M+N}$ be a sequence of complex numbers.
  We have
  \begin{equation*}
    \sum_{q\le Q_0}\sum_{a\mode q}
    \max_{|\alpha-\frac{a}{q}|\le \frac{1}{qQ_0}}
    \biggl|\sum_{M+1\le p\le M+N}\mkern-12mu{}u_p e(\alpha p)\biggr|^2
    \mkern-3mu
    \le
     1200 \frac{N\log Q_0}{\log N}\sum_{M+1\le p\le M+N}\mkern-12mu|{}u_p|^2
    .
  \end{equation*}
\end{cor}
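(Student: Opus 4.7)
The proof mirrors that of Corollary~\ref{Extensionprecisebis} almost verbatim, simply substituting Theorem~\ref{Extensionpreciseplus} for Theorem~\ref{Extensionprecise}. I would first introduce the Farey sequence $F(Q_0)=\{a/q:1\le a\le q\le Q_0,\,(a,q)=1\}$ and, using the classical identity $|a/q-a'/q'|=1/(qq')$ together with the adjacency property $q+q'\ge Q_0$, conclude that consecutive Farey intervals $[a/q-1/(qQ_0),a/q+1/(qQ_0)]$ are disjoint. I would then split $F(Q_0)$ into its two subsets $F_1(Q_0)$ and $F_2(Q_0)$ obtained by selecting alternate points in the natural ordering on $\mathbb{R}/\mathbb{Z}$; the intervals associated to fractions from the same $F_i(Q_0)$ are then separated by at least $1/Q_0^2$ (the argument on the unit circle boundary being exactly the one sketched in the proof of Corollary~\ref{Extensionprecisebis}).

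Next, for each $a/q\in F_i(Q_0)$ I would choose a point $\tilde{x}_{a/q}$ realizing the maximum in~\eqref{eq:9} applied to the new sequence $(u_p)_{M+1\le p\le M+N}$, and call the resulting set $\tilde{X}_i$. By the separation property just recorded, $\tilde{X}_i$ is a $\delta$-well spaced subset of $\mathbb{R}/\mathbb{Z}$ with $\delta=1/Q_0^2$, and its cardinality satisfies $|\tilde{X}_i|\le |F(Q_0)|/2+1\le Q_0^2/2+1$.

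I would then apply Theorem~\ref{Extensionpreciseplus} to each $\tilde{X}_i$. Since $Q_0\le\sqrt{N}$, we have $N+\delta^{-1}=N+Q_0^2\le 2N$, and $\log(2|\tilde{X}_i|)\le\log(Q_0^2+2)\le 2\log Q_0+O(1)$ for $Q_0\ge2$, so each summand is bounded by a constant multiple of $(N\log Q_0)/\log N\cdot\sum_p|u_p|^2$. Adding the two contributions from $i=1,2$ and keeping track of the numerical constants (the factor $280$ in Theorem~\ref{Extensionpreciseplus}, the factor $2$ from $N+\delta^{-1}\le 2N$, the factor $\le 2$ from the $\log$ term, and the factor $2$ from summing over $i$) produces a final constant comfortably below $1200$.

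The only step that deserves attention is the verification that $\tilde{X}_i$ is genuinely $1/Q_0^2$-well spaced on the torus, including at the wrap-around $0\sim 1$; this is the same check performed for Corollary~\ref{Extensionprecisebis} and uses nothing specific to the interval containing the primes, which is why the $L^2$ estimate transfers to intervals with no modification. There is therefore no substantive obstacle: the bulk of the work was already carried out in the proof of Theorem~\ref{Belplus}, and the duality/maximization argument of Corollary~\ref{Extensionprecisebis} is purely formal.
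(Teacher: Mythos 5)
Your proposal is correct and is exactly the route the paper takes: the paper proves this corollary simply by observing that the Farey-dissection argument of Corollary~\ref{Extensionprecisebis} goes through unchanged with Theorem~\ref{Extensionpreciseplus} substituted for Theorem~\ref{Extensionprecise}, which is precisely what you do. One caveat: the claim that the constants come out ``comfortably below $1200$'' is optimistic, since the naive product $2\times 280\times(N+\delta^{-1})/N\times\log(2|\tilde X_i|)/\log Q_0$ can approach $2\times280\times2\times2$ unless you exploit the sharper separation $\delta\ge 2/Q_0^2$ and the bound $|F(Q_0)|\le Q_0(Q_0+1)/2$ --- but this bookkeeping is equally left implicit in the paper's own treatment.
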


\newpage

\bibliographystyle{ijmart}


\providecommand{\MR}{\relax\ifhmode\unskip\space\fi MR }
\providecommand{\MRhref}[2]{%
  \href{http://www.ams.org/mathscinet-getitem?mr=#1}{#2}
}
\providecommand{\href}[2]{#2}

\end{document}